%
%
%
%
%
\RequirePackage{fix-cm}
\documentclass[smallextended]{svjour3}       
\smartqed  
\usepackage{graphicx}
\usepackage{amsfonts}
\usepackage{epsfig}


%
%
%
%
%
\begin{document}

\title{The analysis of FETI-DP preconditioner for \\
full DG discretization of elliptic problems} 


\titlerunning{FETI-DP for full DG}        

\author{Maksymilian Dryja      \and Juan Galvis \and  Marcus  Sarkis 
}

\authorrunning{Dryja, Galvis \& Sarkis} 

\institute{M. Dryja \at
Department of Mathematics, Warsaw University, Banacha 2, 00-097 Warsaw, Poland.
This research was supported 
in part by the Polish Sciences Foundation under grant 2011/01/B/ST1/01179.\\
               \email{ dryja@mimuw.edu.pl}           
           \and
           J. Galvis \at
Departamento de Matem{\' a}ticas, Universidad Nacional de Colombia, 
Bogot{\'a}, Colombia.\\
\email{jcgalvisa@unal.edu.co}
\and
M. Sarkis \at Department of Mathematical
Sciences at Worcester Polytechnic
Institute, 100 Institute Road, Worcester, MA 01609, USA, and 
Instituto Nacional de Matem\'{a}tica Pura
e Aplicada (IMPA), Estrada Dona Castorina 110,
CEP 22460-320, Rio de Janeiro, Brazil. \\
\email{msarkis@wpi.edu}
}

\date{Received: date / Accepted: date}

\maketitle

\begin{abstract}
In this paper a discretization 
based on discontinuous Galerkin (DG) method for 
an elliptic two-dimensional problem with discontinuous
coefficients is considered. The problem is posed on a polygonal 
region $\Omega$ which is a union of $N$ disjoint polygonal subdomains
$\Omega_i$
of diameter $O(H_i)$. The discontinuities of the coefficients, possibly
very large, are assumed to occur only across the subdomain interfaces
$\partial \Omega_i$.  In each $\Omega_i$  a conforming quasiuniform
triangulation 
with parameters $h_i$ is constructed. We assume that the resulting triangulation in $\Omega$ is also 
conforming, i.e., the meshes are assumed to match across the subdomain
interfaces.   On the fine triangulation the problem is discretized by
a DG method. For solving 
the resulting discrete system, a FETI-DP type method is proposed  
and analyzed. It is established that the condition number of the 
preconditioned linear system 
is estimated by 
$C(1 + \max_i \log H_i/h_i)^2$ with a constant $C$ independent of 
$h_i$, $H_i$ and the jumps of coefficients. The method is well suited for 
parallel computations and it can be extended to three-dimensional problems. 
This result is an extension, to the case of full fine-grid DG
discretization,  of the previous result [SIAM J. Numer. Anal., 51
(2013), pp.~400--422] where it was considered a conforming finite
element method inside the subdomains and  a discontinuous Galerkin 
method only across the subdomain interfaces.  Numerical results 
are presented to validate the theory. 
\keywords{
Interior penalty discretization \and discontinuous Galerkin \and
elliptic problems with discontinuous coefficients \and finite element method \and 
FETI-DP algorithms\and preconditioners \and
AMS: 65F10, 65N20, 65N30
}

\end{abstract}


\section{Introduction}
In this paper we consider a boundary value problem for elliptic second order partial differential equations
with highly discontinuous coefficients and homogeneous Dirichlet boundary condition.  
The problem is 
posed  on  a polygonal region $\Omega$ which is a  union of disjoint 
two-dimensional polygonal subdomains $\Omega_i$ of diameter $O(H_i)$.  We assume 
that this partition $\{\Omega_i\}_{i=1}^N$ is geometrically conforming,
i.e., for all $i$ and $j$ with $ i \neq j$, the intersection 
$\partial\Omega_i \cap \partial \Omega_j$ is either empty, a common corner
or a common edge of $\Omega_i$ and $\Omega_j$. We consider the case where the discontinuities  
of the coefficients 
are assumed to occur only across $\partial \Omega_i$. The problem is 
approximated by the symmetric interior penalty discontinuous Galerkin  (SIPDG) method inside
each $\Omega_i$, with $h_i$ as a mesh parameter.  The meshes are 
assumed to match across the interfaces $\partial \Omega_i\cap \partial \Omega_j$.  
In order to deal with the nonconformity of the DG spaces across
$\partial \Omega_i$, a discrete problem is further formulated  
using the SIPDG method on the $\partial \Omega_i$; 
see \cite{MR664882,MR1885715,MR2837483,MR2372235,MR2431403}. 
The main goal of this paper is to develop 
the FETI-DP methodology for this DG discretization. We show that 
the designed FETI-DP method is almost optimal with rate of convergence 
independent of the coefficients jumps. The analysis of the discussed 
precondicioner is based on the analysis used in \cite{MR3033016}.
For that we construct  refinement of the fine meshes on each
$\Omega_i$ and introduce special interpolation operators
which allow to switch from spaces of piecewise linear functions defined on the fine mesh on $\Omega_i$
to spaces of piecewise linear and continuous functions defined on the
refinement of the fine mesh on $\Omega_i$ and vise-versa. The
interpolation operators used in this paper are similar to those
introduced 
\cite{MR1297465,MR1469678} and also used in \cite{MR2970761}.  Properties of the interpolations operators are proved in this paper.
In this paper we extend our results published in \cite{MR3033016} to the full 
DG discretization of the considered problem. In \cite{MR3033016} a FETI-DP preconditioner 
was designed and analyzed for the problem discretized by composed  
finite element  and DG methods, i.e., 
by continuous Finite Element Method (FEM) inside each $\Omega_i$ and the symmetric interior penalty DG method on 
the interfaces $\partial\Omega_i$ only, while here in this paper 
a FETI-DP preconditioner for the case full DG discretization of the problem 
is designed and analyzed. The proposed FETI-DP algorithm is essentially
algebraic and does not require any interpolation operator for the design
of the algorithm, therefore, it can be naturally extended to three-dimentional
problems and for high-order and discretizations elasticity, Stokes and Maxwell. 
An interpolation operator is only
required for the analysis, and to the best of our knowledge it is the
first complete analysis ever developped for any Neumann-Neumann type of 
discretization such as FETI, FETI-DP, BDD, BDDC and NN, and for any 
of the classical interior penalty DG discretizations considered in 
\cite{MR664882,MR1885715,MR2837483}. We expect that the analysis 
developped here can be extended to more general problems as the one
mentioned above. We also remark that a new technique based on 
two interpolation operators were introduced to avoid a condition
on the number of elements that can touch a corner of 
a substructure interface,therefore, the algorithm developed here 
can applied also for subdomains generated from graph partitioners. Furthermore, 
we believe that such technique can be extended to other types of 
full DG discretizations beyond \cite{MR664882,MR1885715,MR2837483}.

For the developing of FETI-DP methods for the continuous FEM, see 
the Introduction of \cite{MR3033016} and the references therein. See also
\cite{MR1813746,MR1802366,MR2421044,MR1921914,MR1835470}.

 In this paper, the full DG discrete problem is reduced to 
the Schur complement problem with respect to 
unknowns on the interfaces of the subdomains $\Omega_i$. For that, discrete 
harmonic functions defined in a special way, i.e., in the DG sense, 
are used. We note that there are 
unknowns on both sides of the interfaces $\partial \Omega_i$. 
This means that the unknowns on both sides of the interfaces
should be kept as degrees of freedom of the linear Schur complement system to be solved.
 These issues
characterize some of the main difficulties on 
designing and analyzing FETI-DP type methods 
for full DG discretizations. Distinctively from the 
classical conforming FEM discretizations, here a  
double layer of Lagrange multipliers are needed on 
interfaces rather than a single layer of Lagrange multipliers
as normally is seen in FETI-DP for conforming FEMs.
Despite the fact we follow the FETI-DP 
abstract approach, 
which was aimed to single layer of Lagrange multipliers, 
see for example \cite{MR2104179},
in this paper we successfully overcome this difficulty.

The algorithm we develop in this paper is as follows. 
Let ${\Gamma}_i^\prime$ be
the union of all edges $\bar{E}_{ij}$ and $\bar{E}_{ji}$ 
which are common to $\Omega_i$ and $\Omega_j$,
where  $\bar{E}_{ij}$ and  $\bar{E}_{ji}$ 
refer to the $\Omega_i$ and $\Omega_j$ sides, respectively, 
see Figure \ref{fig:dof}. 
We note that each ${\Gamma}_i^\prime$ has interface 
unknowns (degrees of freedom) corresponding to nodal points  which are the endpoint of edges of fine triangulation belonging to 
$\overline{\partial \Omega_i\setminus \partial \Omega}$ and
$\overline{E_{ji}} \subset \partial \Omega_j$. Unknowns corresponding to vertices of fine triangles which intersect 
$\bar{E}_{ij} \subset \partial \Omega_i$  
and $\bar{E}_{ji} \subset \partial \Omega_j$  by only one vertex 
are treated as interior unknowns. We now need to couple $\Gamma_i^\prime$
with the other side of the interface $\Gamma_j^\prime$. We first impose continuity at  the  
interface unknowns at the corners of $\Gamma_i^\prime$ (which are 
corners of $\Gamma_i$ and common 
endpoints of $\bar{E}_{ji}$) with the interface unknowns at corners of the $\Gamma_j^\prime$,
see Figure \ref{fig:contV}. These unknowns are called primal. 
The remaining interface unknowns on $\Gamma_i^\prime$ and 
$\Gamma_j^\prime$ are called dual 
and have jumps, hence, Lagrange multipliers are introduced to eliminate 
these jumps, see Figure \ref{fig:cont}. 
 For the dual system with Lagrange multipliers, 
a  special block diagonal preconditioner is 
designed. It leads to independent local problems on $\Gamma_i^\prime$ 
for $1\leq i \leq N.$ It is proved that the proposed method 
is almost optimal with a condition number estimate bounded by 
$C(1 + \max_i\log H_i/h_i)^2$, where $C$ does not depend  
on $h_i$, $H_i$, the number of subdomains $\Omega_i$  and 
the jumps in the coefficients.

 The method can be 
extended to full DG discretizations of three-dimensional problems.

The paper is organized as follows. In Section 2 the differential 
problem and a full DG discretization are formulated. In Section 3,  
the Schur complement problem is derived using discrete harmonic 
functions defined in a special way (in the DG sense). In Section 4,
the  so-called FETI-DP method is introduced, i.e., the Schur 
complement problem is reformulated by imposing continuity 
for the primal variables and by using Lagrange multipliers at the 
dual variables, and a special block diagonal preconditioner is defined. The 
main results of the paper are  Theorem \ref{teorema32} and 
Lemma  \ref{lemma45}. Section 5 is devoted to some technical tools 
and auxiliary lemmas to analyze the FETI-DP preconditioner. The 
proofs of these results are given in the Appendix A and B.
In Section 6 
numerical tests
are reported which confirm the theoretical results.

\section{Differential and discrete problems}
In this section we discuss the continuous problem 
and its DG discretization. The resulting 
discrete problem is taken into consideration for preconditioning.

\subsection{Differential problem}
Consider the following problem: \emph{Find $u_{ex}^* \in H^1_0(\Omega)$ such that}
\begin{equation} \label{contproblem} 
a(u^*_{ex}, v) = f(v) \quad \mbox{ for all } v \in H^1_0(\Omega),
\end{equation}
where $\rho_i > 0$ is a constant, $f \in L^2(\Omega)$ and
$$
a(u, v) := \sum^N_{i=1} \int_{\Omega_i} \rho_i(x) \nabla u \cdot \nabla 
v \,dx \quad \mbox{and} \quad f(v) := \int_\Omega fv\,dx.
$$

We assume that $\overline{\Omega} = \cup^N_{i=1} \overline{\Omega}_i$ and 
the substructures $\Omega_i$ are disjoint shaped regular polygonal 
subregions of diameter $O(H_i)$. We assume that the partition 
$\{\Omega_i\}_{i=1}^N$  
is geometrically conforming,  i.e., for all $i$ and $j$ with  $ i \neq j$, the intersection 
$\partial\Omega_i \cap \partial \Omega_j$ is either empty, a common corner
or a common  edge of $\Omega_i$ and $\Omega_j$. For clarity we stress that here and below 
the identifier  \emph{edge} means a curve of continuous intervals and its two endpoints are called 
corners.  The collection of these corners on 
$\partial \Omega_i$ are referred as  the set of corners of $\Omega_i$.  Let us denote 
$\bar{E}_{ij} := \partial \Omega_i \cap \partial \Omega_j$ as an edge of
$\partial \Omega_i$ and $\bar{E}_{ji} := \partial \Omega_j \cap 
\partial \Omega_i$ as an edge of $\partial \Omega_j$. 
Sometimes we use the notation
$E_{ijh}$ and $E_{jih}$ to refer the sets of nodal 
points of the triangulation on $E_{ij}$ and $E_{ji}$ inherit from ${\cal{T}}_h^i$  and  ${\cal{T}}_h^j$, respectively, where the triangulations are defined below. Additionally we use the notation $\bar{E}_{ijh}$ 
and $\bar{E}_{jih}$ when the endpoints are included.    
Let us denote by ${\cal{J}}_H^{i,0}$ the set of indices $j$ such that  $\Omega_j$
 has a common edge $E_{ji}$ with $\Omega_i$. To take into account edges of
$\Omega_i$ which belong to the global boundary $\partial \Omega$, let us introduce a set of indices
${\cal{J}}_H^{i,\partial}$ to refer these edges. The set of indices of all
 edges of $\Omega_i$ is denoted by ${\cal{J}}_H^i = {\cal{J}}_H^{i,0} \cup \,
{\cal{J}}_H^{i,\partial}$. 

\subsection{Discrete problem} \label{discretization} 

Let us introduce a shape regular and quasiuniform 
triangulation (with 
triangular elements) ${\cal{T}}_h^i$ on $\Omega_i$ and let $h_i$ represent its mesh 
size. The resulting triangulation on $\Omega$ is 
matching across $\partial\Omega_i$. Let 
\[
X_{i}(\Omega_i):= \prod_{\tau \in {\cal{T}}_h^i} X_{\tau}
\]
 be the product space of finite element (FE) spaces $X_{\tau}$  which
consist of linear functions on the element $\tau$ belonging to  
${\cal{T}}_h^i$. We note that a function $u_i \in X_{i}(\Omega_i)$ 
allows discontinuities across elements of ${\cal{T}}_h^i$. We also
note that we do not assume that functions in $X_i(\Omega_i)$ vanish on
$\partial \Omega$. \\

   The global DG 
finite element space we consider is defined by 
\begin{equation} \label{defXi}
X(\Omega)= \prod_{i=1}^N X_i(\Omega_i) \equiv 
X_1(\Omega_1) \times  X_2(\Omega_2) \times \cdots \times  X_N(\Omega_N).
\end{equation}

We define ${\cal{E}}_h^{i,0}$ as the set of edges of the triangulation 
${\cal{T}}_h^i$ which are inside $\Omega_i$, and by
${\cal{E}}_h^{i,j}$, for $j \in 
{\cal{J}}_H^i$, the set of edges of the triangulation 
${\cal{T}}_h^i$ which are on $E_{ij}$. An edge 
$e \in {\cal{E}}_h^{i,0}$ is shared by two elements denoted by 
$\tau_+$ and $\tau_-$ of ${\cal{T}}_h^i$ with outward unit normal 
vectors ${\bf n}^+$ and ${\bf n}^-$, respectively, and denote
\[
\{\rho \nabla u\} = \frac{1}{2}(\rho_{\tau_{+}} \nabla u_{\tau_+} + 
\rho_{\tau_{-}} \nabla u_{\tau_-})~~~~\mbox{and}~~~~
[u] = u_{\tau_{+}} {\bf n}^+ +  u_{\tau_{-}} {\bf n}^-.
\]
 An edge 
$e \in {\cal{E}}_h^{i,\partial}$ is shared by one element  denoted by 
$\tau$ with outward unit normal 
vectors ${\bf n}$, and denote
\[
\{\rho\nabla u\} = \rho_\tau \nabla u_{\tau} ~~~~\mbox{and}~~~~
[u] = u_{\tau}{\bf n}.
\]

   The discrete problem we 
consider by the DG method is of the form:  \emph{Find 
$u^* = \{u_i^*\}_{i=1}^N \in X(\Omega)$ where $u_i^* \in X_i(\Omega_i)$, 
such that}
\begin{equation} \label{discproblem} 
a_h(u^*, v) = f(v) ~~~\;\; \mbox{ for all } v = \{v_i\}_{i=1}^N  \in X(\Omega),
\end{equation}
where the global bilinear from $a_h$ and the right hand side $f$ are assembled as
\begin{equation} \label{ah}
a_h(u, v) := \sum^N_{i=1} {a}^\prime_{i}(u, v) \;\; 
\mbox{and} \;\; f(v):= \sum^N_{i=1} \int_{\Omega_i} fv_i\,dx.
\end{equation}
Here, the local bilinear forms $a^\prime_i$, $i=1,\dots,N$, are defined as
\begin{equation} \label{aprimei}
{a}^\prime_{i}(u, v) := a_i(u_i, v_i) + 
s_{i,0}(u_i,v_i) + p_{i,0}(u,v) + 
s_{i,\partial}(u,v) + p_{i,\partial}(u,v)
\end{equation}
where $a_i$ is the local energy bilinear form,
\begin{equation} \label{bi}
  a_i(u_i, v_i) := \sum_{\tau \in {\cal{T}}_h^i} \int_{\tau} \rho_i \nabla u_i \cdot \nabla 
v_i \,dx.
\end{equation}
The (interior edges) symmetrized bilinear form $s_{i,0}$ is defined by
\begin{equation} \label{s0i}
 s_{i,0}(u_i, v_i):=  - \sum_{e \in  {\cal{E}}_h^{i,0}} \int_{e}
 \{\rho_i \nabla u_i\}\cdot [v_i] + \{\rho_i \nabla v_i\}\cdot [u_i]\,ds, 
\end{equation}
and the  (interior edges) penalty  bilinear form is given by
\begin{equation} \label{p0i}
 p_{i,0}(u,v): =  \sum_{e\in {\cal{E}}_h^{i,0}} 
\int_{e} \delta\frac{\rho_i}{h_e} [u_i].[v_i] \, ds.
\end{equation}
The corresponding symmetric and penalty form over the local interface edges are given by
\begin{equation} \label{spartiali}
s_{i,\partial}(u, v) := \sum_{ j \in {\cal{J}}_H^i}  \sum_{e \in  {\cal{E}}_h^{i,j}}
\int_{e}
\frac{1}{l_{ij}}\left( \rho_i\frac{\partial u_i}{\partial n}(v_j - v_i) + 
\rho_i\frac{\partial v_i}{\partial n}(u_j - u_i)\right)\,ds
\end{equation}
and
\begin{equation}\label{penaltypartiali}
p_{i,\partial}(u, v) :=  \sum_{ j \in {\cal{J}}_H^i} \sum_{e \in
  {\cal{E}}_h^{i,j}} \int_{e} \frac{\delta}{l_{ij}} 
\frac{\rho_i}{h_{e}} (u_i - u_j)(v_i-v_j)\,ds
\end{equation} 
respectively. Here and above, $h_e$ denotes the length of the edge $e$. When  $ j \in  {\cal{J}}_H^{i,0}$ we set 
$l_{ij} = 2$, while when  $j \in {\cal{J}}_H^{i,\partial}$ 
we denote the boundary edges $E_{ij}\subset \partial \Omega_i$ by 
${E}_{i\partial}$ and set $l_{i\partial} = 1$, and
on the artificial edge $E_{ji} \equiv E_{\partial i}$ we set  
$u_\partial =0$ and $v_\partial=0$. The partial derivative 
$\frac{\partial}{\partial n}$ denotes the 
outward normal derivative on $\partial \Omega_i$ and 
$\delta$ is the penalty positive parameter.\\

\begin{remark}
The discrete formulation used here is useful for 
the adequate formulation of our 
FETI-DP method. We note that (\ref{spartiali}) and
(\ref{penaltypartiali}) can also
be written in the same form as in  (\ref{s0i}) and (\ref{p0i}) without the term $\ell_{ij}$ (since now the
edges are counted once). We note also that others DG formulations for
discontinuous coefficients can also be considered
\cite{MR2837483,MR2002258}. We note that the design  
of FETI-DP methods for those formulations are the same, and
the analysis are simple modifications of the proofs we present here in
this paper. See for instance  \cite{MR2372024,MR2914789,MR2436092}
where a formulation based on harmonic averages of the coefficients is
studied. Some details for these case as well as numerical experiments 
will be presented elsewhere. We
note that three-dimensional versions can also be formulated and analyzed
by extending naturaly some ideas from this paper  and from
\cite{DDRennes}.  \\
\end{remark}

For $u = \{u_i\}_{i=1}^N, v = \{v_i\}_{i=1}^N \in X(\Omega)$, let us introduce the 
local positive bilinear forms 
\begin{equation} \label{formdi}
d_i(u, v) := a_i(u, v) + p_{i,0}(u, v) + p_{i,\partial}(u, v) 
\end{equation}
and the global positive bilinear form  assembled as
\begin{equation}
d_h(u, v) := \sum^N_{i=1} d_i(u, v).
\end{equation}
Note that the norm defined by $d_h(\cdot,\cdot)$ is a broken norm in 
$X(\Omega)$ with weights given by $\rho_i$, $\delta\frac{\rho_i}{h_e}$ and 
$\frac{\delta}{l_{ij}} \frac{\rho_i}{h_{e}}$. For 
$u = \{u_i\}^N_{i=1} \in X(\Omega)$,  this discrete norm is defined by 
$\parallel u \parallel^2_h  = d_h(u, u)$.  \\

It is known that there exists a $\delta_0 = O(1)>0$ and a positive constant
$c$, which does not depend on $\rho_i$, $H_i$,  $h_i$ and $u_i$, such that for
every $\delta \geq \delta_0$ we obtain 
$|s_i(u,u)| \leq c d_i(u,u)$ and therefore, the following
lemma is valid.

\begin{lemma} \label{lemmac}
There exists $\delta_0 > 0$ such that for $\delta \geq \delta_0$ and for all $u \in X(\Omega)$ we have, 
in each subdomain,
\begin{equation}
\gamma_0 d_i(u, u) \leq {a}^\prime_{i}(u,u) \leq \gamma_1 d_i(u, u), \quad 1 \leq i\leq N,
\end{equation}
and also we have the following global bilinear forms inequality
\begin{equation}
\gamma_0 d_h(u, u) \leq a_h(u, u) \leq \gamma_1 d_h(u, u).
\end{equation}
Here, $\gamma_0$ and $\gamma_1$ are positive constants independent of the 
$\rho_i, h_i$, $H_i$ and $u$. \\
\end{lemma}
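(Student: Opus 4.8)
The plan is to reduce the two-sided bound to a single absorption estimate for the non-symmetric (consistency) part of $a'_i$. Comparing the definitions (\ref{aprimei}) and (\ref{formdi}) and writing $s_i := s_{i,0} + s_{i,\partial}$ gives the purely algebraic identity
\[
a'_i(u,u) = d_i(u,u) + s_i(u,u).
\]
Hence the lemma follows at once from a bound of the form $|s_i(u,u)| \le c\, d_i(u,u)$ with a constant $c<1$ independent of $\rho_i$, $h_i$, $H_i$ and $u$: this yields $(1-c)\,d_i(u,u) \le a'_i(u,u) \le (1+c)\,d_i(u,u)$, so one may take $\gamma_0 = 1-c$ and $\gamma_1 = 1+c$. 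Since this estimate can be established for each fixed index $i$, summing over $i$ and using (\ref{ah}) together with $d_h = \sum_i d_i$ gives the global inequality with the very same constants.

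To estimate the interior term $s_{i,0}(u,u) = -2\sum_{e}\int_e \{\rho_i\nabla u_i\}\cdot[u_i]\,ds$ I would apply Cauchy--Schwarz edge by edge, bounding $|\int_e \{\rho_i\nabla u_i\}\cdot[u_i]|$ by $\|\{\rho_i\nabla u_i\}\|_{L^2(e)}\,\|[u_i]\|_{L^2(e)}$, and then control the flux by the element energy. The essential tool is the discrete trace/inverse inequality for the piecewise linear functions in $X_i(\Omega_i)$, namely $\|\nabla u_\tau\|_{L^2(e)}^2 \le C\,h_e^{-1}\|\nabla u_\tau\|_{L^2(\tau)}^2$ for an edge $e$ of the shape-regular element $\tau$, with $C$ depending only on the shape-regularity constant. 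Splitting via Young's inequality with a free parameter $\eta>0$ weighted by the penalty scaling $\rho_i/h_e$ produces
\[
|s_{i,0}(u,u)| \le \eta\,C\,a_i(u,u) + \frac{1}{\eta\,\delta}\,p_{i,0}(u,u),
\]
where the factor $\delta^{-1}$ appears because $p_{i,0}$ carries the weight $\delta\rho_i/h_e$ while $s_{i,0}$ carries no power of $\delta$. Crucially the constant $\rho_i$ enters $\{\rho_i\nabla u_i\}$, $a_i$ and $p_{i,0}$ with matching powers, so $C$ is independent of the coefficient jumps.

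The interface term $s_{i,\partial}$ has the same flux-times-jump structure, with $\rho_i\,\partial u_i/\partial n$ controlled on $E_{ij}$ by $a_i(u_i,u_i)$ and the jump $(u_i-u_j)$ matched by $p_{i,\partial}$; the bounded factor $1/l_{ij}$ and the homogeneous convention on $E_{\partial i}$ cause no difficulty, so the same reasoning yields $|s_{i,\partial}(u,u)| \le \eta\,C\,a_i(u,u) + (\eta\delta)^{-1}p_{i,\partial}(u,u)$. Adding the two estimates, first fixing $\eta$ small enough that the total coefficient of $a_i$ is at most $\frac14$, and then choosing $\delta_0$ large enough that $(\eta\delta)^{-1}\le \frac14$ for $\delta\ge\delta_0$, gives $|s_i(u,u)| \le \frac12\,d_i(u,u)$, i.e.\ $c=\frac12$. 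The main obstacle is precisely this simultaneous calibration: one must verify that the trace/inverse constant $C$ is genuinely uniform over the quasiuniform family and independent of $\rho_i$, and then respect the order of the choices ($\eta$ fixed and $\delta$-independent, $\delta_0$ depending on $\eta$ and $C$) so that the consistency term is driven below the coercive terms. Once this is secured, the passage from the local bounds to the global one is immediate by summation.
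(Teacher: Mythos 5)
Your proposal is correct and takes essentially the same approach as the paper: the paper reduces the lemma exactly to the absorption estimate $|s_i(u,u)| \leq c\, d_i(u,u)$ for $\delta \geq \delta_0$ (with the details omitted, citing modifications of \cite{MR2372024} and \cite{MR2002258}), and your edge-by-edge Cauchy--Schwarz, the trace/inverse inequality for piecewise linears, and the Young-inequality calibration with $\eta$ fixed first and $\delta_0$ chosen afterwards is precisely the standard argument behind that estimate. You also correctly secure $c<1$ (needed for $\gamma_0>0$) and the jump-independence via the matching powers of $\rho_i$ in the flux, energy, and penalty terms, so nothing is missing.
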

The proof of this lemma is a modification of the proof of  \cite[Lemma 2.1]{MR2372024}, 
\cite{MR2914789} 
or \cite[Theorem 4.1]{MR2002258}, therefore it is omitted.

Lemma \ref{lemmac} implies that the discrete 
problem (\ref{discproblem}) is elliptic and continuous, therefore,
the solution exists and it is unique and stable. An optimal 
a priori error estimate of 
this method was established  in \cite{MR664882,MR1885715} for the continuous
coefficient case.
We mention here that Lemma \ref{lemmac} together with Lemma \ref{lemmaequiv}, see below,  
are going to be fundamental for establishing condition number estimates
for the FETI-DP preconditioned system developed in the remaining sections.

\section{Schur complement matrices and discrete harmonic extensions}
The first step of many iterative substructuring solvers, such as 
the FETI-DP method that we consider in this paper, requires
the elimination of unknowns interior to the subdomains. In this section,
we describe this step for DG discretizations. \\


We introduce some notation and then formulate (\ref{discproblem})
as a variational problem with constraints. Associated to a  
subdomain $\Omega_i$, we define the extended subdomain
${\Omega}_i^\prime$ by
\[
{\Omega}_i^\prime := \overline{\Omega}_i \;\bigcup\; 
\{\cup_{j \in {\cal{J}}_H^{i,0}} \bar{E}_{ji}\}
\]
i.e., it is the union of $\overline{\Omega}_i$ and the 
$\bar{E}_{ji} \subset \partial \Omega_j$ such that 
$j \in {\cal{J}}_H^{i,0}$, and the local interfaces ${\Gamma}_i$ by
\[
{\Gamma}_i: = \overline{\partial \Omega_i \backslash \partial \Omega}
~~~\mbox{and}~~~{\Gamma}_i^\prime : = \Gamma_i \;\bigcup \;
\{\cup_{j\in {\cal{J}}_H^{i,0}} \bar{E}_{ji} \}.
\]
We also introduce the sets
\begin{equation} \label{Gammadef} 
\Gamma := \bigcup_{i=1}^N {\Gamma}_i, ~~ 
\Gamma^\prime:=  \prod_{i=1}^N  {\Gamma}_i^\prime, 
~~ I_i :={\Omega}^\prime_i \backslash {\Gamma}_i^\prime
~~~\mbox{and}~~~ I: =  \prod_{i=1}^N I_i.
\end{equation}

\begin{figure}[htb]
\centering
{\psfig{figure=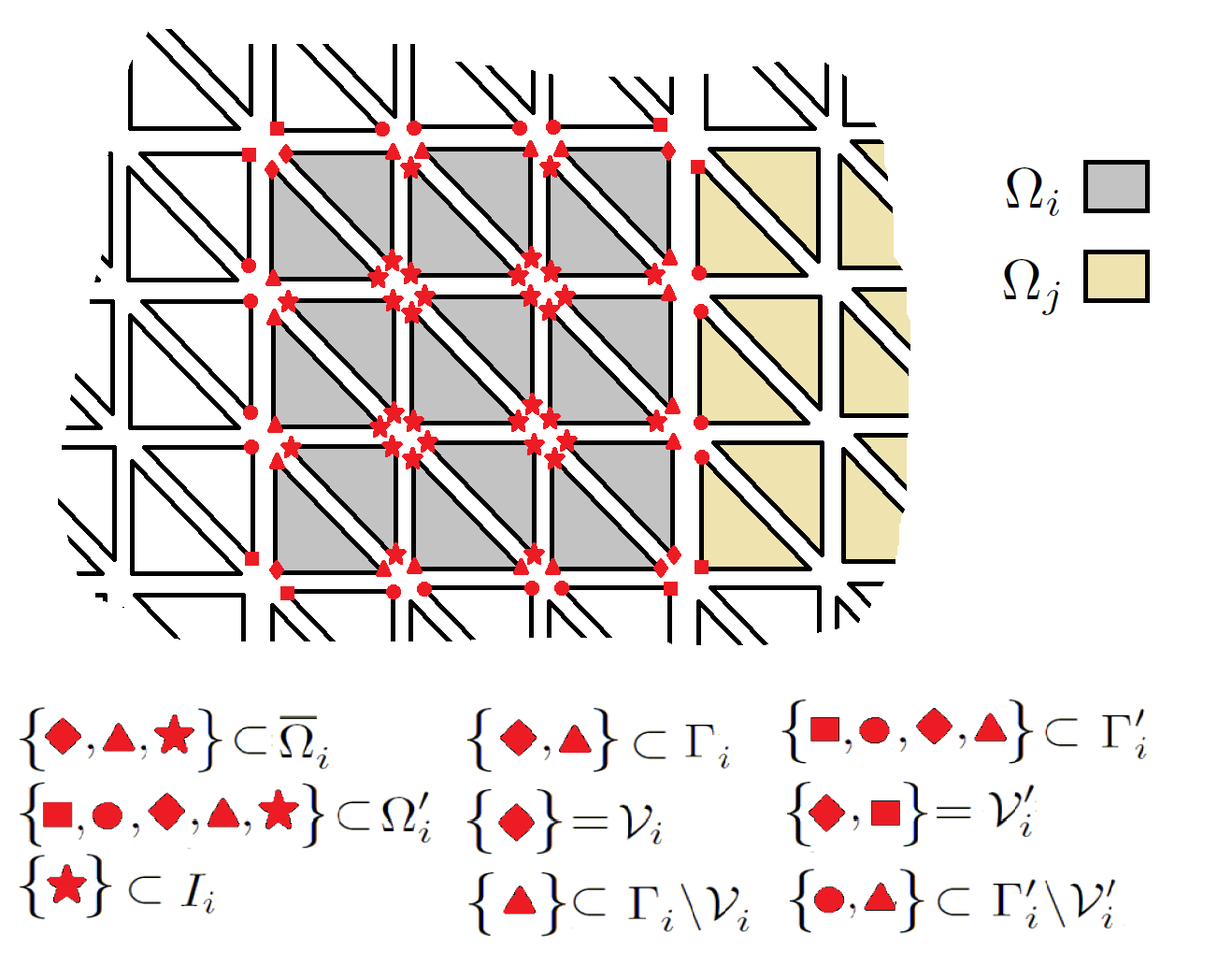,height=9cm,width=12cm,angle=0}}
\caption{ Illustration of the classification of degrees of freedom 
related to subdomain $\Omega_i$.}
\label{fig:dof}
\end{figure}

Associated to these sets, we classify degrees of freedom (DG nodal
values) on $\Omega_i^\prime$. We illustrate this classification 
(along with further classifications to be introduced later) in Figure
\ref{fig:dof}:
\begin{itemize}
\item $\Gamma_i$-degres of freedom: The nodal points which are endpoints of edges of
 ${\cal{E}}_h^{i,j}$ for $j \in {\cal{J}}_H^{i,0}$. 
\item Degrees ${\cal{V}}_i$:  The nodal points which are both 
an endpoint of an edges of  ${\cal{E}}_h^{i,j}$ for $j \in {\cal{J}}_H^{i,0}$
and a corner of $\Omega_i$. 
\item $\Gamma_i^\prime$-degrees of freedom: It is the union of the degrees
  $\Gamma_i$ and the nodal points which are endpoints of edges of
${\cal{E}}_h^{j,i}$ for  $j \in {\cal{J}}_H^{i,0}$.
\item ${\cal{V}}_i^\prime$-degrees of freedom:  It is the union of the degrees
  ${\cal{V}}_i$, and  the nodal points which are both an endpoint of an
  edge of ${\cal{E}}_h^{j,i}$ for  $j \in {\cal{J}}_H^{i,0}$ and a corner 
of $\Omega_j$. 
\item $I_i$-degrees of freedom: The nodal points which are vertices of 
elements $\tau$ of ${\cal{T}}_h^i$ and are not  endpoints of an
  edge of $\tau$ in ${\cal{E}}_h^{i,j}$ for  $j \in {\cal{J}}_H^{i,0}$. 
\item $\overline{\Omega}_i$-degrees of freedom: The union of
  $\Gamma_i$ and $I_i$ degrees of freedom. 
\item $\Omega_i^\prime$-degrees of freedom:  The union of
  $\Gamma_i^\prime$ and $I_i$ degrees of freedom.
\end{itemize} 

\begin{remark} Nodal points of elements in ${\cal{T}}_h^i$ which 
intersect $\bar{E}_{ij}$ 
by only one vertex are treated as $I_i$ degrees of freedom. The trace
of the basis DG functions associated to these 
nodal points are zero almost everywhere on $\Gamma_i$, hence, the 
analysis shows that it is convenient to treat these nodes as 
as $I_i$-degrees of freedom as defined above. For the same reason, nodal points of 
elements of ${\cal{T}}_h^j$ which intersect $\bar{E}_{ji}$ by vertex only for $j \in
  {\cal{J}}_H^{i,0}$, are not included as $\Omega_i^\prime$-degrees of
  freedom. We point out, however, that we could have considered 
these nodes points as $\Gamma_i$ and $\Omega_i^\prime$-degrees of
freedom, respectively as well, and by introducing also Lagrange multipliers to 
force continuity at these type of nodal values, and then 
eliminating these Lagrange multipliers and also these degrees
of freedom. This approach would be equivalent to the approach we 
consider and analyze here in this paper.   
\end{remark}

We mention that if we refer to the classical FETI-DP design, i.e., for continuous FEM, 
the set $I$ corresponds to the set of interior degrees of freedom (that are block-uncoupled) 
and can be eliminated in a first step. Moreover,  the
$\Gamma$ corresponds to the global interface with the original coupling and 
$\Gamma^\prime$ corresponds to the (block-wise) torn interface. We recall that 
the classical FETI-DP design extends the original problem to a problem in the torn 
interface space and then construct  and implement suitable restrictions to interconnect 
 back some of the broken coupling between degrees of freedom. We now describe these 
steps for DG discretization in detail. First we introduce the corresponding functions spaces.

 Let $W_i({\Omega}_i^\prime)$ be the  
FE space of functions defined by  
values on  ${\Omega}^\prime_i$
\begin{equation} \label{defiwi}
W_i({\Omega}^\prime_i) = W_i(\overline{\Omega}_i) \times
\prod_{j \in {\cal{J}}_H^{i,0}} W_i(\bar{E}_{ji}),
\end{equation}
where $W_i(\overline{\Omega}_i) := X_i(\Omega_i)$ and 
$W_i(\bar{E}_{ji})$ is the trace of the DG space $X_j(\Omega_j)$
on $\bar{E}_{ji}  \subset \partial 
\Omega_j$ for all $j \in {\cal{J}}_H^{i,0}$. A function
$ {u}^\prime_i \in W_i({\Omega}^\prime_i)$ is defined by the 
${\Omega}^\prime_i$-degrees of freedom. Below,
we  denote ${u}^\prime_i$ by $u_i$ if it is not confused with functions
of $X_i(\Omega_i)$. A function $ {u}_i \in W_i({\Omega}^\prime_i)$ 
is represented as 
\[
{u}_i = \{({u}_i)_i, \{({u}_i)_j\}_{j \in {\cal{J}}_H^{i,0}} \},
\]
where $(u_i)_i := {u}_{i\, | \overline{\Omega}_i}$ (${u}_i$ restricted
to $\overline{\Omega}_i$) and 
$(u_i)_j := {u}_{i\, | \bar{E}_{ji}}$ (${u}_i$ restricted
to  $\bar{E}_{ji}$).  
Here and below we use the same notation to identify 
both DG functions and their vector representations.  
Note that ${a}^\prime_i(\cdot,\cdot)$, see (\ref{aprimei}), is defined on 
$W_i({\Omega}^\prime_i) \times W_i({\Omega}^\prime_i)$ with corresponding 
stiffness matrix  ${A}^\prime_i$ defined by 
\begin{equation} \label{hataim}
{a}^\prime_i({u}_i, {v}_i) = \langle {A}^\prime_i {u}_i, 
{v}_i \rangle \quad {u}_i, {v}_i  \in W_i({\Omega}^\prime_i),
\end{equation}
where  $\langle {u}_i, {v}_i \rangle$ denotes the  $\ell_2$ 
inner product  of nodal values associated to the vector space in consideration.
We also represent 
${u}_i \in W_i({\Omega}^\prime_i)$ as ${u}_i =({u}_{i,I}, {u}_{i,\Gamma^\prime})$ where  
${u}_{i,\Gamma^\prime}$ represents values of ${u}_i$ at nodal 
points on ${\Gamma}_i^\prime$ and 
${u}_{i,I}$ at the interior nodal points in $I_i$, see (\ref{Gammadef}). 
 Hence, 
let us represent  $W_i({\Omega}^\prime_i)$ as the vector spaces
$W_i(I_i) \times W_i({\Gamma}_i^\prime)$.  Using the 
representation ${u}_i= ({u}_{i,I}, {u}_{i,\Gamma^\prime})$, the matrix 
${A}^\prime_i$ can be represented as 
     \begin{eqnarray} \label{Alocal}
{A}^\prime_{i} =
\left(
\begin{array}{cc}
{A}^\prime_{i,II} & {A}^\prime_{i,I\Gamma^\prime} \\
{A}^\prime_{i,\Gamma^\prime I} & {A}^\prime_{i,\Gamma^\prime\Gamma^\prime}
\end{array}
\right),
\end{eqnarray}
where the block rows and columns correspond to the 
 nodal points of $I_i$ and ${\Gamma}_i^\prime$, respectively. \\


The Schur complement of ${A}^\prime_i$ 
 with respect to $u_{i,\Gamma^\prime}$ is of the form
\begin{equation} \label{Si}
{S}^\prime_i  := {A}^\prime_{i,\Gamma^\prime\Gamma^\prime} - {A}^\prime_{i,\Gamma^\prime I}({A}^\prime_{i,II})^{-1} {A}^\prime_{i,I\Gamma^\prime}
\end{equation}
and introduce the block diagonal matrix $S^\prime=    \mbox{diag}\{S_i^\prime\}_{i=1}^N$.
Note that ${S}_i^\prime$ satisfies 
\begin{equation} \label{minloc}
\langle {S}^\prime_{i} {u}_{i,\Gamma^\prime}, {u}_{i,\Gamma^\prime} \rangle
\;=\; \min \; {a}^\prime_i({w}_i, {w}_i), 
\end{equation}
where the minimum is taken over $w_i =(w_{i,I},w_{i,\Gamma^\prime}) \in W_i({\Omega}^\prime_i)$ such that 
$w_{i,\Gamma^\prime} = u_{i,\Gamma^\prime}$ on ${\Gamma}^\prime_i$.
The bilinear form ${a}^\prime_i(\cdot,\cdot)$ is symmetric and nonnegative, 
see Lemma \ref{lemmac}. The minimizing function satisfying (\ref{minloc}) is 
called discrete harmonic in the sense of ${a}^\prime_i(\cdot,\cdot)$ or in 
the sense of ${\cal{H}}^\prime_i$. An equivalent definition of the 
minimizing function 
${\cal{H}}^\prime_i u_{i,\Gamma^\prime} \in  W_i({\Omega}^\prime_i)$ is given by 
the solution of
\begin{equation} \label{hatHi}
{a}^\prime_i({\cal{H}}^\prime_i u_{i,\Gamma^\prime}, v_{i,\Gamma^\prime}) =
 0 \qquad v_{i,\Gamma^\prime} \in 
{\stackrel{o}{W}}_i({\Omega}^\prime_i),
\end{equation}
\begin{equation} \label{hatHibdry}
{\cal{H}}^\prime_i u_{i,\Gamma^\prime} = u_{i,\Gamma^\prime}  \quad \mbox{on} \;\; {\Gamma}^\prime_i,
\end{equation}
where ${\stackrel{o}{W}}_i({\Omega}^\prime_i)$ is the subspace of 
${W}_i({\Omega}^\prime_i)$ of functions which vanish on ${\Gamma}^\prime_i$.
We note that for substructures $\Omega_i$ which
intersect $\partial \Omega$ by edges, the nodal values
of ${W}_i({\Omega}^\prime_i)$ on 
$\partial \Omega_i \backslash {\Gamma}_i\subset \partial\Omega$ are 
treated as  unknowns and belong to $I_i$. \\

  Let us introduce the product space 
\begin{equation}
W(\Omega^\prime) := \prod^N_{i=1} W_i({\Omega}^\prime_i),
\end{equation}
i.e., $u \in W(\Omega^\prime)$ means that 
$u = \{{u}_i\}_{i=1}^N$ where $u_i \in W_i({\Omega}^\prime_i)$; see
(\ref{defiwi}) for the definition of  $W_i({\Omega}_i^\prime)$. 
 Recall  that we write $(u_i)_i = {u}_{i\, | \overline{\Omega}_i}$ (${u}_i$ restricted
to $\overline{\Omega}_i$) and 
$(u_i)_j= {u}_{i\, | \bar{E}_{ji}}$ (${u}_i$ restricted
to  $\bar{E}_{ji}$). Using the representation $u_i = (u_{i,I}, u_{i,\Gamma^\prime})$ where
$u_{i,I} \in W_i(I_i)$ and $u_{i,\Gamma^\prime} \in W_i(\Gamma_i^\prime)$, see (\ref{Alocal}), 
let us introduce the product space
\begin{equation} 
W(\Gamma^\prime) :=\prod^N_{i=1} W_i({\Gamma}_i^\prime),
\end{equation}
i.e., $u_{\Gamma^\prime} \in  W(\Gamma^\prime)$ means that 
$u_{\Gamma^\prime} = \{u_{i,\Gamma^\prime}\}_{i=1}^N$ where $u_{i,\Gamma^\prime}
\in W_i({\Gamma}^\prime_i)$. The space $W(\Gamma^\prime)$ which
was defined on $\Gamma^\prime$ only, is also  
interpreted below as the subspace of ${W}({\Omega}^\prime)$ of functions 
which are  
discrete harmonic in the sense of 
${\cal{H}}_i^\prime$ in each $\Omega_i$. \\

We now consider the subspaces
$\hat{W}({\Omega}^\prime) \subset W({\Omega}^\prime)$ and $\hat{W}({\Gamma}^\prime) \subset W({\Gamma}^\prime)$
 as the space of functions which are  continuous
 on $\Gamma$ in the sense of the following definition (for notation see 
(\ref{Gammadef})).  \\

\begin{figure}[htb]
\centering
{\psfig{figure=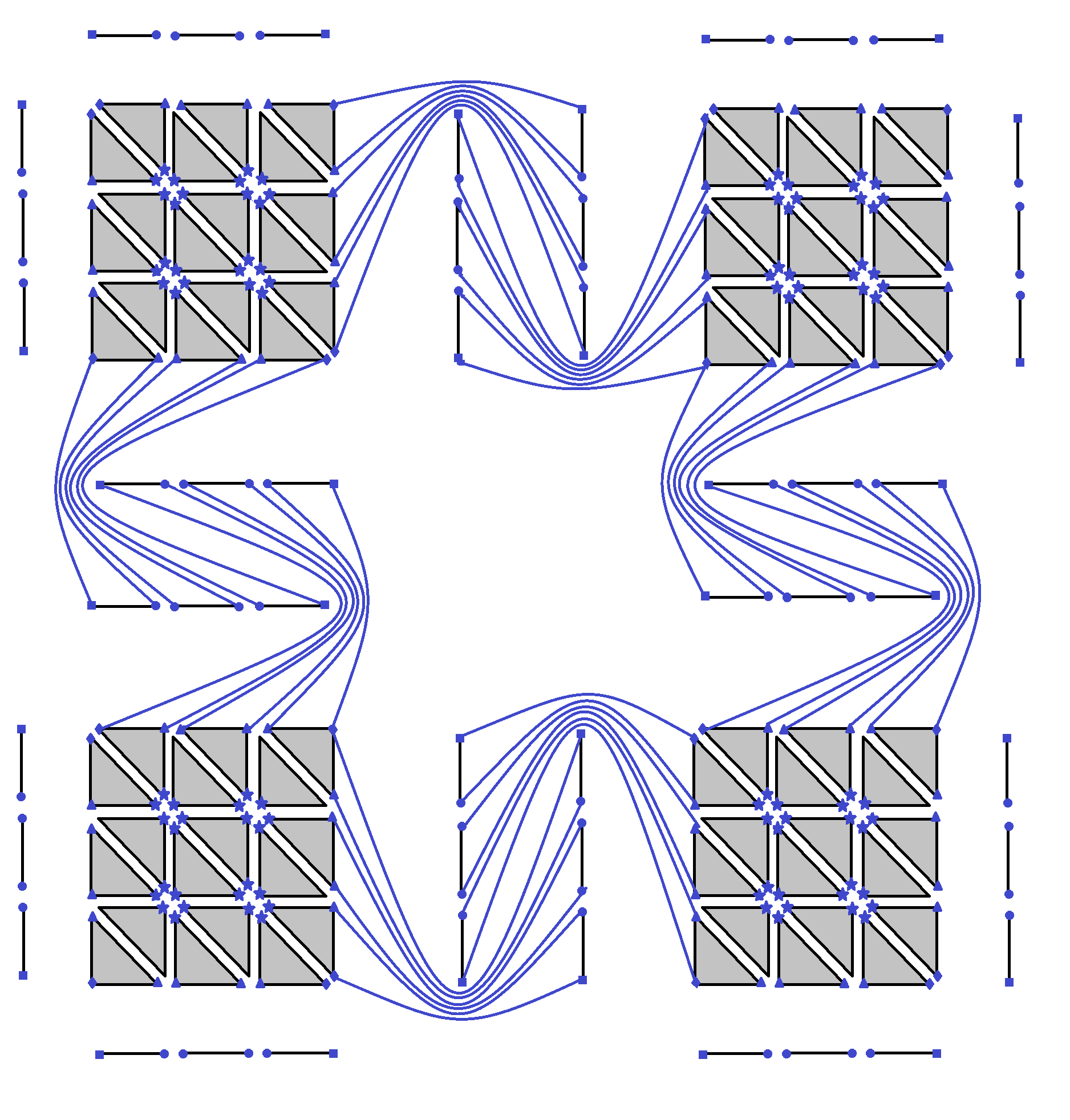,height=8cm,width=10cm,angle=0}}
\caption{ Illustration of the continuity on $\Gamma$.}
\label{fig:cont}
\end{figure}

\begin{definition}[Spaces $\hat{W}({\Omega^\prime})$ and  $\hat{W}({\Gamma}^\prime)$] \label{defini1} 
We say that $u = \{u_i\}_{i=1}^N \in W({\Omega}^\prime)$ 
is  continuous  on $\Gamma$ if for all $i$, $1\leq i\leq N$, we have  
\begin{equation} \label{continGamma}
(u_i)_i(x) = (u_j)_i(x)\;\;\;\; \mbox{for all} \;\;x \in \bar{E}_{ij} \;\; \mbox{for all} \;\; 
j \in  \,{\mathcal{J}}_H^{i,0}
\end{equation}
and
\begin{equation} \label{contoutGamma}
(u_i)_j(x) = (u_j)_j(x) \;\;\;\;  \mbox{for all} \;\;x \in \bar{E}_{ji} \;\; \mbox{for all} \;\; 
j \in  \, {\mathcal{J}}_{H}^{i,0}.
\end{equation} 
In Figure \ref{fig:cont} we illustrate this continuity by assigning 
the same nodal value for nodes connected by a line. The 
subspace of ${W}({\Omega^\prime})$  
of continuous functions on $\Gamma$ is denoted by $\hat{W}({\Omega}^\prime)$, 
and the subspace of $\hat{W}({\Omega}^\prime)$ of 
functions which are  
discrete harmonic in the sense of 
${\cal{H}}_i^\prime$ in each $\Omega_i$ is 
denoted by  $\hat{W}({\Gamma}^\prime)$. \\
 \end{definition}

Note that there is a one-to-one correspondence between vectors 
in the spaces $X(\Omega)$ and $\hat{W}(\Omega^\prime)$. Indeed, let
us introduce the restriction matrices
 ${R}_{{\Omega}^\prime_i}: X(\Omega) 
\rightarrow W_i({\Omega}^\prime_i)$ which 
assign uniquely the vector values of $u = \{u_i\}_{i=1}^N \in X(\Omega)$ 
where $u_i \in X_i(\Omega_i)$, see (\ref{defXi}), to
$v_i \in W_i({\Omega}^\prime_i)$ defined by 
$({v}_i)_i = u_i$ on $\overline{\Omega}_i$
and $({v}_i)_j = u_j$ on $\bar{E}_{ji}$ for all $j \in {\mathcal{J}}_H^{i,0}$.
It is easy to see that $v := \{{v}_i\}_{i=1}^N$ where
$v_i := {R}_{{\Omega}^\prime_i} u$ belongs to $\hat{W}(\hat{\Omega})$.
And vice-versa, for each 
 $v = \{{v}_i\}_{i=1}^N \in \hat{W}(\Omega)$, we can define uniquely 
$u = \{u_i\}_{i=1}^N \in X(\Omega)$ by setting $u_i = (v_i)_i$. Since
$u$ and $v$ have identical nodal values, we  refer sometimes
$u \in \hat{W}(\Omega^\prime)$ or $u \in X(\Omega)$. For instance,
the solution $u^*$ of  (\ref{discproblem}) can be interpreted 
as a function in $\hat{W}(\Omega^\prime)$ or in $X(\Omega)$.  \\

  Note that the discrete problem (\ref{discproblem}) can
be written as a system of algebraic equations
\begin{equation} \label{algebraic}
\hat{A}u^* = f
\end{equation}
for $u^* \in  X(\Omega)$ using the standard FE basis functions, 
and $f =\{f_i\}_{i=1}^N \in  X(\Omega)$,
where $f_i$ is the load vector associated with individual subdomains
$\Omega_i$, i.e., is $\int_{\Omega} f v_i$ when $v_i$ are 
the canonical basis functions of $X_i(\Omega_i)$.  The 
stiffness matrix $\hat{A}$ can be obtained
by assembling the matrices ${A}_i^\prime$,  see (\ref{hataim}), from 
$W({\Omega}^\prime)$ to $X(\Omega)$ as
\[
\hat{A} = \sum_{i=1}^N {R}_{{\Omega}^\prime_i}^T {A}^\prime_i {R}_{{\Omega}^\prime_i}.
\]
Note that the matrix 
$\hat{A}$ is not block diagonal since there are couplings 
between substructures due to the continuity on $\Gamma$, see Defintion 
\ref{defini1}. \\

Note also that $X(\Omega)$ can be  componentwise represented  by
$X(I) \times X(\Gamma)$, denoted also by $X(\Omega)$, where 
  $X(I) := \prod_{i=1}^N X_i(I_i)$  is the vector space of
functions defined by nodal values on $I_i$, and 
$X(\Gamma) := \prod_{i=1}^N X_i(\Gamma_i)$ by the nodal
values on $\Gamma_i$, see (\ref{Gammadef}) and Figure \ref{fig:dof}.
 Hence, we can represent
$u \in X(\Omega)$  
as $u = (u_I, u_\Gamma)$ with $u_I \in X(I)$ and $u_\Gamma \in X(\Gamma)$.  
We introduce the 
restriction $R_{{\Gamma}^\prime_i}: X(\Gamma) \rightarrow {W}_i({\Gamma}^\prime_i)$
by assigning values $u_\Gamma \in  X(\Gamma)$ into $u_i \in 
{W}_i({\Gamma}^\prime_i)$ at the nodes of $\Gamma_i^\prime$. 
 By eliminating the variable $u_I^* = \{u^*_{i,I}\}_{i=1}^N$ 
of $u^* = (u^*_I,  u^*_\Gamma)$  
from (\ref{algebraic}), see (\ref{Alocal}) and (\ref{Si}), it is easy to see
that 
\begin{equation} \label{SchurS}
\hat{S}u^*_\Gamma = \hat{g}_\Gamma
\end{equation}
where
\begin{equation} \label{assemblyingS} 
\hat{S} = \sum_{i=1}^N 
 {{R}}_{{\Gamma}^\prime_i}^T  {S}^\prime_i {{R}}_{{\Gamma}^\prime_i}\quad \mbox{and} \quad  \hat{g}_\Gamma = {f}_\Gamma - \sum_{i=1}^N 
  {{R}}_{{\Gamma}^\prime_i}^T {A}^\prime_{i, \Gamma^\prime I} 
({A}^\prime_{i, II})^{-1} f_i,
\end{equation}
 with ${f}_\Gamma := \{f_{i,\Gamma}\}_{i=1}^N$ and $f_{i,\Gamma} \in
X_i(\Gamma_i)$. Here, the load vector $f_{i,\Gamma}$ is defined by 
$\int_{\Omega_i} f v_{i,\Gamma}$ when $v_{i,\Gamma}$ are the cannonical basis
functions of $X_i(\Omega_i)$ associated to nodes on $\Gamma_i$. It is also 
easy to see from  (\ref{hatHi}) and (\ref{hatHibdry}) that 
\begin{eqnarray} \label{equivSandA}
\left(
\begin{array}{c}
v_{i,I} \\
v_{i,\Gamma^\prime} 
\end{array}
\right)^T 
\left(
\begin{array}{cc}
{A}^\prime_{i,II} & {A}^\prime_{i,I\Gamma^\prime} \\
{A}^\prime_{i,\Gamma^\prime I} & {A}^\prime_{i,\Gamma^\prime\Gamma^\prime}
\end{array}
\right)  
\left(
\begin{array}{c}
{\mathcal{H}}_i^\prime  u_{i,\Gamma^\prime} \\
u_{i,\Gamma^\prime} 
\end{array}
\right)  
=\;
\langle {S}^\prime_i  u_{i,\Gamma^\prime} ,v_{i,\Gamma^\prime} \rangle.
\end{eqnarray}

Note that $\hat{W}(\Gamma^\prime)$ is the natural space for 
defining $\langle\hat{S}\; \cdot, \cdot \rangle$ due to (\ref{assemblyingS}), (\ref{equivSandA}) and 
the  continuity   of $\hat{W}(\Gamma^\prime)$ on $\Gamma$, see 
Definition \ref{defini1}.

\section{FETI-DP with corner constraints}
We now design a FETI-DP method for solving (\ref{SchurS}). We 
follow the abstract approach described in pages 160-167 in \cite{MR2104179}. \\

 For $1\leq i \leq N$, we introduce the nodal points associated to the corner
unknowns, see Figure \ref{fig:dof}, by  
\begin{equation} \label{tildenui}
{\mathcal{V}}^\prime_i: =\mathcal{V}_i \;\bigcup \;\{\cup_{j \in 
\mathcal{J}_H^{i,0}} \;\partial E_{ji}\} 
\qquad \mbox{where} \qquad 
{\mathcal{V}}_i := \{ \cup_{j \in {\mathcal{J}}_H^{i,0}}  \;\partial E_{ij}\}.
\end{equation}

We now consider the subspace 
$\tilde{W}({\Omega}^\prime) \subset W({\Omega}^\prime)$ and $\tilde{W}({\Gamma}^\prime) \subset W({\Gamma}^\prime)$
 as the space of functions which are  continuous
 on all the ${\mathcal{V}}_i$ in the sense of the following 
definition:  \\

\begin{figure}[htb]
\centering
{\psfig{figure=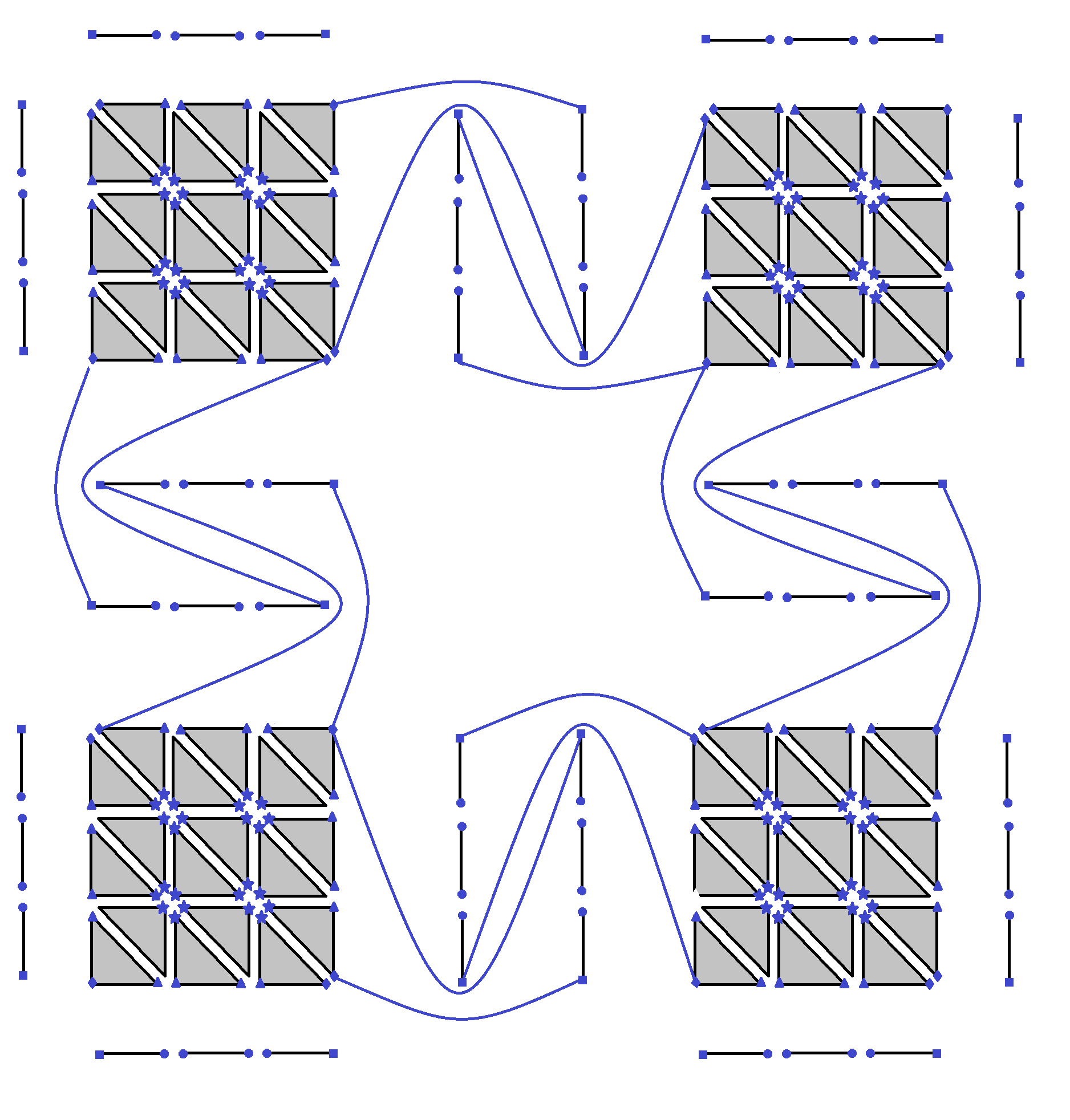,height=8cm,width=10cm,angle=0}}
\caption{Illustration of the continuity at the corners.}
\label{fig:contV}
\end{figure}

\begin{definition}[Subspaces $\tilde{W}({\Omega^\prime})$ and 
$\tilde{W}({\Gamma^\prime})$] \label{defini2} 
We say that $u = \{u_i\}_{i=1}^N \in W({\Omega^\prime})$ 
is  continuous  at the corners ${\mathcal{V}}_i^\prime$ if 
for $1 \leq i \leq N$ we have  
\begin{equation} \label{contin}
(u_i)_i(x) = (u_j)_i(x)\;\;\;\; \mbox{at} \;\;x \in \partial E_{ij} \;\; \mbox{for all} \;\; 
j \in \,{\mathcal{J}}_H^{i,0}
\end{equation}
and
\begin{equation} \label{contout}
(u_i)_j(x) = (u_j)_j(x) \;\;\;\;  \mbox{at} \;\;x \in \partial E_{ji} \;\; \mbox{for all} \;\; 
j \in  \, {\mathcal{J}}_H^{i,0}.
\end{equation}
In Figure \ref{fig:contV} we illustrate this continuity by 
assigning the same 
nodal value at nodes (corners) connected by a line.
The subspace of ${W}({\Omega^\prime})$ of continuous functions 
at the corners 
 ${\mathcal{V}}^\prime_i$ for all $1\leq i\leq N$ 
is denoted by  $\tilde{W}({\Omega^\prime})$,
and the subspace of  $\tilde{W}({\Omega^\prime})$ of functions which are 
discrete harmonic in the sense of 
${\cal{H}}_i^\prime$ is denoted by  $\tilde{W}({\Gamma^\prime})$. \\
 \end{definition} 

Note that 
\begin{equation} \label{inclusion}
\hat{W}(\Gamma^\prime) \subset \tilde{W}(\Gamma^\prime) \subset W(\Gamma^\prime).
\end{equation}

Let $\tilde{A}$  be the stiffness matrix which is obtained by assembling 
the matrices ${A}_i^\prime$ for $1 \leq i \leq N$, from $W(\Omega^\prime)$ to 
$\tilde{W}(\Omega^\prime)$. Note that the matrix  
$\tilde{A}$ is no longer block diagonal since there are couplings 
between variables at the corners  ${\mathcal{V}}_i^\prime$ for $1\leq i\leq N$. 
We will represent 
$u \in \tilde{W}(\Omega^\prime)$ as $u = (u_I, u_\Pi, u_\triangle)$ where the 
subscript
$I$ refers to the interior degrees of freedom at nodal points $I = \prod_{i=1}^N I_i$, 
the $\Pi$ refers to the corners ${\mathcal{V}}_i^\prime$ for all $1\leq i\leq N$,
and the $\triangle$ refers 
to the remaining nodal points, i.e., the nodal points 
${\Gamma}_i^\prime  \backslash {\mathcal{V}}_i^\prime$, for all 
$1\leq i\leq N$.  The vector $u = (u_I, u_\Pi, u_\triangle) \in \tilde{W}(\Omega^\prime)$ is obtained from the vector $u = \{u_i\}_{i=1}^N 
\in W(\Omega^\prime)$ using the equations (\ref{contin}), (\ref{contout}), i.e., 
the continuity of $u$ on $ {\mathcal{V}}_i^\prime$ for all $1\leq i \leq N$.
Using 
the decomposition $u = (u_I, u_\Pi, u_\triangle) \in \tilde{W}(\Omega^\prime)$ 
we can partition $\tilde{A}$ as
\begin{eqnarray} \label{tildeA}
\tilde{A} =
\left(
\begin{array}{ccc}
{A}^\prime_{II} & {A}^\prime_{I\Pi} & {A}^\prime_{I\triangle} \\
{A}^\prime_{\Pi I}
 & \tilde{A}_{\Pi\Pi} & {A}^\prime_{\Pi \triangle} \\
{A}^\prime_{\triangle I } & {A}^\prime_{\triangle \Pi} & {A}^\prime_{\triangle \triangle}
\end{array}
\right). 
\end{eqnarray}
We note that the only couplings across subdomains are through the
variables $\Pi$ where the matrix $\tilde{A}$ is subassembled.   \\

A Schur complement of $\tilde{A}$ with respect to the $\triangle$-unknowns (eliminating the $I$- and the $\Pi$-unknowns) is of the form
\begin{eqnarray} \label{tildeS} 
\tilde{S} := {A}^\prime_{\triangle\triangle} - ({A}^\prime_{\triangle I} ~ {A}^\prime_{\triangle \Pi})
\left(
\begin{array}{cc}
{A}^\prime_{II} & {A}^\prime_{I\Pi} \\
{A}^\prime_{\Pi I} & \tilde{A}_{\Pi\Pi}
\end{array}
\right)^{-1}
\left(
\begin{array}{c}
{A}_{I\triangle}^\prime \\ {A}_{\Pi\triangle}^\prime
\end{array}
\right).
\end{eqnarray}

  A vector $u \in \tilde{W}(\Gamma^\prime)$ can uniquely be represented by 
$u = (u_\Pi,u_\triangle)$, therefore, we can represent 
$\tilde{W}(\Gamma^\prime)= \hat{W}_\Pi(\Gamma^\prime) \times 
 {W}_\triangle(\Gamma^\prime)$, 
where
$\hat{W}_\Pi(\Gamma^\prime)$ refers to the $\Pi$-degrees of freedom of 
$\tilde{W}(\Gamma^\prime)$ while ${W}_\triangle(\Gamma^\prime)$ to the 
$\triangle$-degrees of freedom of $\tilde{W}(\Gamma^\prime)$. The vector space 
${W}_\triangle (\Gamma^\prime)$ can be decomposed as 
\begin{equation} \label{productdelta}
{W}_\triangle (\Gamma^\prime) = \prod^N_{i=1} {W}_{i,\triangle}({\Gamma}_i^\prime)
\end{equation}
where the local space ${W}_{i,\triangle}({\Gamma}_i^\prime)$ refers to the degrees of freedom 
associated to the nodes of ${\Gamma}_i^\prime \backslash {\mathcal{V}}_i^\prime$ for
$i=1\leq i\leq N$.
Hence, 
a  vector $u \in \tilde{W}(\Gamma^\prime)$ can be represented as $u = (u_\Pi, u_\triangle)$ 
with $u_\Pi \in \hat{W}_\Pi(\Gamma^\prime)$ and 
$u_\triangle = \{u_{i,\triangle} \}^N_{i=1} \in {W}_\triangle(\Gamma^\prime)$ 
where $u_{i,\triangle} \in 
{W}_{i,\triangle}({\Gamma}_i^\prime)$. Note that $\tilde{S}$, see (\ref{tildeS}), is 
defined on the vector space ${W}_\triangle(\Gamma^\prime)$, and 
the following lemma
follows (cf. Lemma 6.22 in \cite{MR2104179} and Lemma 4.2 in \cite{MR1835470}): \\
\begin{lemma} \label{lemma41}
Let $u_\triangle \in {W}_\triangle(\Gamma^\prime)$ and let $\tilde{S}$ 
and $\tilde{A}$, defined in (\ref{tildeS}) and (\ref{tildeA}), respectively.
Then, 
\begin{equation}
\langle\tilde{S} u_\triangle, u_\triangle \rangle\; = \min \langle \tilde{A}w, 
w\rangle 
\end{equation}
where the minimum is taken over  $w = (w_I, w_\Pi, w_\triangle) \in \tilde{W}(\Omega)$ such that $w_\triangle = u_\triangle$. \\
\end{lemma}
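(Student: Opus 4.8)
The plan is to establish the usual energy-minimization characterization of a Schur complement, in the spirit of Lemma 6.22 of \cite{MR2104179}. Since $\tilde{S}$ in (\ref{tildeS}) eliminates the $I$- and $\Pi$-unknowns \emph{jointly}, I would first group them into a single block. Writing $w=(w_I,w_\Pi,w_\triangle)$ and abbreviating $w_0:=(w_I,w_\Pi)$, set
\begin{equation} \label{Bblock}
B := \left(\begin{array}{cc} A'_{II} & A'_{I\Pi} \\ A'_{\Pi I} & \tilde{A}_{\Pi\Pi}\end{array}\right),
\qquad
C := \left(\begin{array}{c} A'_{I\triangle} \\ A'_{\Pi\triangle}\end{array}\right),
\end{equation}
so that, by (\ref{tildeS}), $\tilde{S}=A'_{\triangle\triangle}-C^{T}B^{-1}C$, while for any $w$ with $w_\triangle=u_\triangle$ the partition (\ref{tildeA}) gives $\langle\tilde{A}w,w\rangle=\langle Bw_0,w_0\rangle+2\langle w_0,Cu_\triangle\rangle+\langle A'_{\triangle\triangle}u_\triangle,u_\triangle\rangle$, where I used $\langle Cu_\triangle,w_0\rangle=\langle w_0,Cu_\triangle\rangle$ from the symmetry of $\tilde{A}$.

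Second, I would carry out the minimization over $w_0$ with $u_\triangle$ held fixed. Because $B$ is symmetric positive definite (discussed below), the quadratic $w_0\mapsto\langle Bw_0,w_0\rangle+2\langle w_0,Cu_\triangle\rangle$ is strictly convex and coercive, so it attains a unique minimum at the critical point $Bw_0+Cu_\triangle=0$, i.e. at $w_0=-B^{-1}Cu_\triangle$. Substituting this value back, and using the symmetry of $B^{-1}$, the quadratic and cross terms collapse:
\begin{equation} \label{collapse}
\langle\tilde{A}w,w\rangle=\langle C^{T}B^{-1}Cu_\triangle,u_\triangle\rangle-2\langle C^{T}B^{-1}Cu_\triangle,u_\triangle\rangle+\langle A'_{\triangle\triangle}u_\triangle,u_\triangle\rangle=\langle\tilde{S}u_\triangle,u_\triangle\rangle,
\end{equation}
which is precisely the asserted identity. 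I note that the minimizer $w_0=-B^{-1}Cu_\triangle$ is the discrete harmonic extension in the sense of the $\mathcal{H}'_i$ (cf. (\ref{minloc})--(\ref{hatHi})) compatible with the corner coupling, so the $w$ realizing the minimum lies in $\tilde{W}(\Gamma^\prime)$; this ties the identity to the way $\tilde{S}$ acts on $W_\triangle(\Gamma^\prime)$.

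The single point that needs genuine justification, and the main obstacle, is the invertibility --- indeed the positive definiteness --- of $B$ in (\ref{Bblock}): it is needed both for $\tilde{S}$ to be well defined through (\ref{tildeS}) and for the minimum above to be attained and unique. By Lemma \ref{lemmac} each $a'_i$ is symmetric and equivalent to the broken norm $d_i$, hence nonnegative, so $\tilde{A}$ and $B$ are at least positive semidefinite. To upgrade to definiteness I would argue that if $\langle\tilde{A}w,w\rangle=0$ with $w_\triangle=0$ then each local energy $a'_i(w_i,w_i)$ vanishes; by the structure of $d_i$ this forces $w_i$ to be constant on $\overline{\Omega}_i$ (and to carry that same constant onto the attached edges $\bar{E}_{ji}$), with the constant equal to zero whenever $\Omega_i$ abuts $\partial\Omega$ through the penalty term $p_{i,\partial}$. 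The vanishing of $w_\triangle$ then pins this constant to zero on every remaining (floating) subdomain, provided each $\Gamma^\prime_i$ carries at least one non-corner ($\triangle$) node; consequently $w_0=0$ and $B$ is positive definite. This is the standard role played by the primal corner constraints $\mathcal{V}^\prime_i$ in FETI-DP, and it is where the choice of corners (and, in the degenerate regime $H_i/h_i\approx1$, the care about how many elements meet a corner) is used.
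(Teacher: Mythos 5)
Your proof is correct and is essentially the same argument the paper relies on: the paper states Lemma \ref{lemma41} without proof, deferring to the standard Schur-complement energy-minimization result (Lemma 6.22 of \cite{MR2104179}), which is precisely the block-elimination/completing-the-square computation you carry out, and your direct verification that the $(I,\Pi)$-block $B$ is positive definite supplies the justification the paper only asserts (``$\tilde{S}$ is symmetric and positive definite since $\tilde{A}$ has these properties''). One minor remark: your proviso that each $\Gamma_i^\prime$ carry at least one $\triangle$-node can be dropped, since the corner continuity (\ref{contin})--(\ref{contout}) built into $\tilde{W}(\Omega^\prime)$ already forces the subdomain constants $c_i$ to agree across neighboring subdomains, and propagation to subdomains touching $\partial\Omega$ (where $p_{i,\partial}$ pins the constant to zero) then yields $w_0=0$ unconditionally.
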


   Let us take $u \in \tilde{W}(\Gamma^\prime)$ as 
$u = (u_\Pi, u_\triangle)$ with 
$u_\Pi \in \hat{W}_\Pi(\Gamma^\prime)$ and $u_\triangle \in {W}_\triangle(\Gamma^\prime)$, where $u_\triangle = \{u_{i,\triangle}\}^N_{i=1}$ with  
$u_{i,\triangle} \in {W}_{i,\triangle}({\Gamma}_i^\prime)$. The vector $u_{i,\triangle} \in 
{W}_{i,\triangle}({\Gamma}_i^\prime)$ can be partitioned as
$$
u_{i,\triangle} = \{(u_{i,\triangle})_i, \{(u_{i,\triangle})_j\}_{j \in 
{\mathcal{J}}_{H}^{i,0}}\}
$$
where
$$
(u_{i,\triangle})_i = u_{i,\triangle|_{\Gamma_i \backslash {\mathcal{V}}_i}}
 \;\; \mbox{and} \;\; (u_{i,\triangle})_j = u_{i,\triangle |_{{E}_{ji}}}.
$$

In order to measure the jump of $u_\triangle \in 
{W}_{\triangle}(\Gamma^\prime)$ across the $\triangle$-nodes  let 
us introduce the space $\hat{W}_{\triangle}(\Gamma)$ defined by  
\[
\hat{W}_{\triangle}(\Gamma) =  \prod_{i=1}^N 
X_i(\Gamma_i \backslash {\mathcal{V}}_i),
\]
where $X_i(\Gamma_i \backslash \mathcal{V}_i)$ is the restriction
of $X_i(\Omega_i)$ to $\Gamma_i \backslash {\mathcal{V}}_i$, see
Figure \ref{fig:dof}. To define the jumping matrix $B_\triangle: 
{W}_\triangle(\Gamma^\prime)
\rightarrow \hat{W}_\triangle(\Gamma)$,  
let $u_\triangle = \{ u_{i,\triangle}\}_{i=1}^N \in 
{W}_\triangle(\Gamma^\prime)$ and let $v := B_\triangle u$
where $ v = \{v_i\}_{i=1}^N 
\in \hat{W}_\triangle(\Gamma)$ is defined by 
\begin{equation} \label{jump1}
v_{i} = (u_{i,\triangle})_i - (u_{j,\triangle})_i  \;\; \mbox{on} \;\;
E_{ijh}\;\; \mbox{for all }\;\; j \in {\mathcal{J}}_H^{i,0}.
\end{equation}
The jumping matrix $B_\triangle$ can be written as 
\begin{equation} \label{Bdelta}
B_\triangle = (B^{(1)}_\triangle, 
 B^{(2)}_\triangle, \cdots, B^{(N)}_\triangle),
\end{equation}
where the rectangular matrix 
$B^{(i)}_\triangle$ consists of  columns 
of $B_\triangle$ attributed to the $(i)$ components of functions of  
${W}_{i,\triangle}(\Gamma^\prime_i)$ 
of the product 
space ${W}_\triangle(\Gamma^\prime)$, see (\ref{productdelta}). 
The 
 entries of 
the rectangular matrix consist of values of $\{0,1,-1\}$.
 It is easy to see that the 
$\mbox{Range}~ B_\triangle = \hat{W}_\triangle(\Gamma)$, so
$B_\triangle$ is full rank. In addition, if $u = (u_\Pi,u_\triangle) \in 
\tilde{W}(\Gamma^\prime)$ and $B_\triangle u_\triangle = 0$ then 
$u \in \hat{W}(\Gamma^\prime)$. \\

 We can reformulate the problem (\ref{SchurS}) as the variational problem with 
constraints  in ${W}_\triangle(\Gamma^\prime)$ space:  \emph{Find $u^*_\triangle \in {W}_\triangle(\Gamma^\prime)$ such that}
\begin{equation} \label{infJ}
J(u^*_\triangle) = \min J(v_\triangle)
\end{equation}
subject to $v_\triangle \in {W}_\triangle(\Gamma^\prime)$ with constraints $B_\triangle v_\triangle = 0$, where
\begin{equation}
J(v_\triangle) := 1/2 \langle \tilde{S} v_\triangle, v_\triangle\rangle - 
\langle \tilde{g}_\triangle, v_\triangle\rangle
\end{equation}
where $\tilde{S}$ is defined in (\ref{tildeS}) and 
\begin{eqnarray} \label{gtildetri}
\tilde{g}_\triangle := {f}_\triangle - ({A}^\prime_{\triangle I} ~ 
{A}^\prime_{\triangle \Pi})
\left(
\begin{array}{cc}
{A}^\prime_{II} & {A}^\prime_{I\Pi} \\
{A}^\prime_{\Pi I} & \tilde{A}_{\Pi\Pi}
\end{array}
\right)^{-1}
\left(
\begin{array}{c}
f_I \\ {f}_\Pi
\end{array}
\right).
\end{eqnarray}
We note that $f = \{f_i\}_{i=1}^N \in X(\Omega)$ was defined in
(\ref{algebraic}) and it can be represented
as $f = (f_I, f_\Pi, f_{\Gamma \backslash \Pi})$. It remains
to define  ${f}_\triangle$ in (\ref{gtildetri}). The forcing term
 ${f}_\triangle \in W_\triangle(\Gamma^\prime)$ is defined by 
${f}_\triangle = \{{f}_{i,\triangle}\}_{i=1}^N$ where 
${f}_{i,\triangle}$ are the load vectors associated with the 
individual subdomains $\Omega_i$, i.e., the entries ${f}_{i,\Delta}$
are defined as $\int_{\Omega_i} f v_{i,\Delta} dx$ when $v_{i,\Delta}$
are the canonical basis functions of ${W}_{i,\Delta}({\Gamma}_i^\prime)$. 
Note that 
$\tilde{S}$ is symmetric and positive definite
since $\tilde{A}$ has 
these properties; see also Lemma \ref{lemma41}. Introducing 
Lagrange multipliers 
$\lambda \in \hat{W}_\triangle(\Gamma)$, 
the problem (\ref{infJ}) reduces 
to the saddle point problem of the form: \emph{Find 
$u^*_\triangle \in {W}_\triangle(\Gamma^\prime)$ and $\lambda^* \in 
\hat{W}_\triangle(\Gamma)$
 such that}
\begin{eqnarray} \label{saddle}
\left\{
\begin{array}{ccccc}
\tilde{S} u^*_\triangle &+& B^T_\triangle \lambda^* & = &\tilde{g}_\triangle \\ 
B_\triangle u^*_\triangle & & &  = & 0.
\end{array}
\right.
\end{eqnarray}
Hence, (\ref{saddle}) reduces to 
\begin{equation} \label{ProjF0}
F\lambda^* = g
\end{equation}
where
\begin{equation} \label{ProjF}
F := B_\triangle \tilde{S}^{-1} B^T_\triangle, \qquad g:= B_\triangle \tilde{S}^{-1} \tilde{g}_\triangle.
\end{equation}
When $\lambda^*$ is computed, $u^*_\triangle$ can be found by solving the
 problem
\begin{equation} \label{Fandg}
\tilde{S} 
 u^*_\triangle = \tilde{g}_\triangle - B^T_\triangle \lambda^*.
\end{equation} \\

\subsection{Dirichlet Preconditioner}

We now define the FETI-DP preconditioner for $F$, see (\ref{ProjF}). 
Let ${S}^\prime_{i,\triangle}$ be the 
Schur complement of ${S}^\prime_i$, see (\ref{Si}), restricted to 
${W}_{i,\triangle}({\Gamma}_i^\prime) \subset {W}_i({\Gamma}_i^\prime)$, 
i.e., taken ${S}^\prime_i$ on functions in ${W}_i({\Gamma}_i^\prime)$ which 
vanish on $   {\mathcal{V}}_i^\prime$. Let
\begin{equation} \label{Stri}
{S}^\prime_\triangle :=     \mbox{diag} \{{S}^\prime_{i,\triangle}\}^N_{i=1}.
\end{equation}
In other words, ${S}^\prime_{i,\triangle}$ is obtained from ${S}^\prime_i$ by deleting rows and columns corresponding to nodal values at nodal points of 
$ {\mathcal{V}}_i^\prime \subset {\Gamma}_i^\prime$. \\

Let us introduce diagonal scaling matrices 
$D^{(i)}_\triangle: {W}_{i,\triangle}(\Gamma_i^\prime) \rightarrow
{W}_{i,\triangle}(\Gamma_i^\prime)$, for $1 \leq i \leq N$. 
The diagonal entry of $D^{(i)}_\triangle$ associated to a node $x \in 
\Gamma_i^\prime \backslash {\mathcal{V}}_i^\prime$,  
which we denote by $D^{(i)}_\triangle(x)$, is defined  by 
\begin{equation} \label{diagscal} 
D^{(i)}_{\triangle}(x) = \frac{\rho^{\beta}_j}
{\rho^{\beta}_i + \rho^{\beta}_j}\;\; \mbox{for}\;\;x \in \{E_{ijh} \cup E_{jih}\} 
\;\; \mbox{for}\;\;j \in {\cal{J}}_H^{i,0}, 
\end{equation}
for $\beta \in [1/2,\infty)$, see \cite{MR1469678}, and define 
\begin{equation} \label{BD}
B_{D,\triangle} = ( B^{(1)}_\triangle D^{(1)}_\triangle, \cdots , B^{(N)}_\triangle D^{(N)}_\triangle).
\end{equation}
Let 
\begin{equation} \label{Ptri}
P_\triangle := B^T_{D,\triangle} B_\triangle
\end{equation}
which maps ${W}_\triangle(\Gamma^\prime)$ into itself. It is easy to check 
that  for $w_\triangle = \{ w_{i,\triangle}\}^N_{i=1} \in {W}_\triangle(\Gamma^\prime)$ and $v_\triangle := P_\triangle w_\triangle$, the following
equalities hold:
\begin{equation} \label{Pdelta1}
(v_{i,\triangle})_i(x) = \frac{\rho^{\beta}_j}{\rho^{\beta}_i + \rho^{\beta}_j} [(w_{i,\triangle})_i(x) - (w_{j,\triangle})_i(x)], \qquad x \in E_{ijh},
\end{equation}
\begin{equation} \label{Pdelta2}
(v_{i,\triangle})_j(x) = \frac{\rho^{\beta}_j}{\rho^{\beta}_i + \rho^{\beta}_j} [(w_{i,\triangle})_j(x) - (w_{j,\triangle})_j(x)], \qquad x \in E_{jih}.
\end{equation}
Note that 
\begin{equation} \label{Pdelta3}
(v_{j,\triangle})_j(x) = \frac{\rho^{\beta}_i}{\rho^{\beta}_i + \rho^{\beta}_j} [(w_{j,\triangle})_j (x) - (w_{i,\triangle})_j (x)], \qquad x \in E_{jih},
\end{equation} 
\begin{equation}  \label{Pdelta4}
(v_{j,\triangle})_i(x) = \frac{\rho^{\beta}_i}{\rho^{\beta}_i + \rho^{\beta}_j} [(w_{j,\triangle})_i (x)) - (w_{i,\triangle})_i (x))], \qquad x \in E_{ijh}.
\end{equation}

By subtracting (\ref{Pdelta4}) from (\ref{Pdelta1}) and 
(\ref{Pdelta2}) from (\ref{Pdelta3}) we see 
that $P_\triangle$ preserves jumps in the sense that 
\begin{equation} \label{preserve} 
B_\triangle P_\triangle = B_\triangle. 
\end{equation}
From this follows that 
$P_\triangle$ is a projection ($P_\triangle^2 = P_\triangle$). \\

In the FETI-DP method,  the preconditioner $M^{-1}$ is defined
as follows: 
\begin{equation} \label{Minv}
M^{-1} = B_{D,\triangle} {S}^\prime_\triangle B^T_{D,\triangle} = \sum^N_{i=1} 
B^{(i)}_\triangle D^{(i)}_\triangle {S}^\prime_{i,\triangle} D^{(i)}_\triangle 
(B^{(i)}_\triangle)^T.
\end{equation}
Note that $M^{-1}$ is a block-diagonal matrix, and each block is 
invertible since ${S}_{i,\triangle}^\prime$ and $D^{(i)}_\triangle$ are invertible
and $B^{(i)}_\triangle$ is a full rank matrix. The following theorem
holds. \\

\begin{theorem} \label{teorema32}
For any $\lambda \in \hat{W}_\triangle(\Gamma)$ it holds that
\begin{equation} \label{precM}
\langle M \lambda, \lambda \rangle\; \leq \;\langle F \lambda, \lambda \rangle \;\leq \;C(1 + \log \frac{H}{h})^2 \langle M \lambda, \lambda \rangle
\end{equation}
where $C$ is a positive constant independent of $h_i$, $h_i/h_j$, $H_i$, $\lambda$ 
 and the jumps of $\rho_i$. Here and 
below, $\log(\frac{H}{h}):= \max_{i=1}^N \log(\frac{H_i}{h_i})$. \\
\end{theorem}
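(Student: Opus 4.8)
The plan is to follow the abstract FETI-DP framework of \cite{MR2104179} and to reduce both inequalities in (\ref{precM}) to a single polylogarithmic bound on the scaled jump projection $P_\triangle = B_{D,\triangle}^T B_\triangle$ of (\ref{Ptri}) measured in energy. Three facts already at hand will be used repeatedly: the identity $B_\triangle P_\triangle = B_\triangle$ from (\ref{preserve}); the minimization characterizations of $\tilde S$ and $S_\triangle^\prime$ in Lemma \ref{lemma41} and (\ref{minloc}); and the spectral comparison
\[
\langle \tilde S u_\triangle, u_\triangle\rangle \;\le\; \langle S_\triangle^\prime u_\triangle, u_\triangle\rangle, \qquad u_\triangle \in W_\triangle(\Gamma^\prime),
\]
which holds because $\langle S_\triangle^\prime u_\triangle,u_\triangle\rangle$ is the block-diagonal energy obtained by forcing all primal values on $\mathcal{V}_i^\prime$ to vanish, and the corresponding function is an admissible (corner-continuous) competitor in the constrained minimization of Lemma \ref{lemma41} that defines $\tilde S$. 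I abbreviate $|v|_{\tilde S}^2 := \langle \tilde S v, v\rangle$ and $|v|_{S_\triangle^\prime}^2 := \langle S_\triangle^\prime v, v\rangle$.

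For the lower bound, fix $\lambda$ and set $w := \tilde S^{-1} B_\triangle^T\lambda \in W_\triangle(\Gamma^\prime)$, so that $\langle F\lambda,\lambda\rangle = |w|_{\tilde S}^2$ and $B_\triangle^T\lambda = \tilde S w$; see (\ref{ProjF}). Starting from the dual identity $\langle M\lambda,\lambda\rangle = \max_{\mu}\langle\lambda,\mu\rangle^2/\langle M^{-1}\mu,\mu\rangle$ and writing an arbitrary $\mu\in\hat W_\triangle(\Gamma)=\mathrm{Range}\,B_\triangle$ as $\mu = B_\triangle u_\mu$, I would use $B_\triangle P_\triangle = B_\triangle$ to replace $u_\mu$ by $P_\triangle u_\mu$, obtaining $\langle\lambda,\mu\rangle = \langle \tilde S w, P_\triangle u_\mu\rangle$. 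Cauchy--Schwarz in the $\tilde S$--inner product followed by the comparison above then gives $\langle\lambda,\mu\rangle^2 \le |w|_{\tilde S}^2\,|P_\triangle u_\mu|_{S_\triangle^\prime}^2$. Since (\ref{Minv}) yields $\langle M^{-1}\mu,\mu\rangle = |B_{D,\triangle}^T\mu|_{S_\triangle^\prime}^2 = |P_\triangle u_\mu|_{S_\triangle^\prime}^2$, this reads $\langle\lambda,\mu\rangle^2 \le \langle F\lambda,\lambda\rangle\,\langle M^{-1}\mu,\mu\rangle$ for every $\mu$, whence $\langle M\lambda,\lambda\rangle \le \langle F\lambda,\lambda\rangle$. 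This direction produces no logarithmic factor.

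For the upper bound I keep the same $w$ and put $\eta := B_\triangle w = B_\triangle P_\triangle w \in \hat W_\triangle(\Gamma)$, so that $\langle F\lambda,\lambda\rangle = \langle\lambda,\eta\rangle$. Cauchy--Schwarz in the $M$--inner product gives $\langle\lambda,\eta\rangle \le \langle M\lambda,\lambda\rangle^{1/2}\langle M^{-1}\eta,\eta\rangle^{1/2}$, and by (\ref{Minv}) one has $\langle M^{-1}\eta,\eta\rangle = |P_\triangle w|_{S_\triangle^\prime}^2$. Consequently the entire theorem collapses to the single key estimate
\[
|P_\triangle w|_{S_\triangle^\prime}^2 \;\le\; C\Big(1+\log\tfrac{H}{h}\Big)^2\,|w|_{\tilde S}^2, \qquad w\in W_\triangle(\Gamma^\prime),
\]
since substituting it into the previous line yields $\langle F\lambda,\lambda\rangle \le \omega\,\langle M\lambda,\lambda\rangle$ with $\omega = C(1+\log\frac{H}{h})^2$.

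The main obstacle is this key estimate, and it is where the DG structure really enters. By (\ref{Stri}) the left--hand side splits as $\sum_{i=1}^N |(P_\triangle w)_{i,\triangle}|_{S_{i,\triangle}^\prime}^2$, and by (\ref{minloc}) together with Lemma \ref{lemmac} each local term is equivalent to the broken energy $d_i$ of the $\mathcal H_i^\prime$--discrete harmonic extension of the interface data produced by $P_\triangle$. From (\ref{Pdelta1})--(\ref{Pdelta4}) this data is the $\rho^\beta$--weighted jump $\frac{\rho_j^\beta}{\rho_i^\beta+\rho_j^\beta}\big((w_{i,\triangle})_i-(w_{j,\triangle})_i\big)$ on $E_{ijh}$ (and its analogue on $E_{jih}$), which vanishes at the corners $\mathcal{V}_i^\prime$. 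I would bound each local term by the classical logarithmic estimate for the energy of the discrete harmonic extension of edge data that vanishes at the endpoints of the edge. The difficulty specific to the full DG discretization is that these functions are discontinuous across the elements of $\mathcal T_h^i$ and harmonic only in the $\mathcal H_i^\prime$ sense, so the standard conforming Sobolev and trace inequalities do not apply verbatim; to circumvent this I would transfer the function, through the interpolation operators between $X_i(\Omega_i)$ and the continuous piecewise--linear space on a refinement of $\mathcal T_h^i$, to a conforming function on the refined mesh, apply the logarithmic bounds there, and transfer back, the required norm equivalences being exactly the content of Lemma \ref{lemmaequiv} and of the auxiliary lemmas of Section 5 proved in Appendices A and B. Finally, the choice of weights $\frac{\rho_j^\beta}{\rho_i^\beta+\rho_j^\beta}$ with $\beta\ge 1/2$ in (\ref{diagscal}) is precisely what makes the $\rho_i$--weighted edge energy on $\Omega_i$ controllable by $\min(\rho_i,\rho_j)$ up to a constant, hence matchable against the neighbouring contribution, so that $C$ is independent of the jumps; summing over $i$ and using the primal corner continuity built into $\tilde S$ to absorb the remaining cross--subdomain coupling delivers the right--hand side $|w|_{\tilde S}^2$.
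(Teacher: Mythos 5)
Your proposal is correct and follows essentially the same route as the paper: the paper proves the theorem by invoking the abstract FETI-DP framework of Theorem 6.35 in \cite{MR2104179} (whose duality and Cauchy--Schwarz arguments you simply write out explicitly) and reducing everything to exactly your two ingredients, namely the comparison $\langle \tilde{S} u_\triangle, u_\triangle\rangle \le \langle S^\prime_\triangle u_\triangle, u_\triangle\rangle$ (Lemma \ref{lemma43}, proved by the same minimization argument you give) and the key bound $\|P_\triangle u_\triangle\|^2_{S^\prime_\triangle} \le C(1+\log\frac{H}{h})^2 \|u_\triangle\|^2_{\tilde{S}}$ (Lemma \ref{lemma45}). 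Your sketch of that key bound also matches the paper's proof in Section 5 --- subtraction of edge-linear data vanishing at the primal corners, transfer to the conforming refined-mesh space via the interpolation operators and Lemma \ref{lemmaequiv}, $H^{1/2}_{00}$ logarithmic edge estimates, and the weight inequality $\frac{\rho_i\rho_j^{2\beta}}{(\rho_i^\beta+\rho_j^\beta)^2}\le\min\{\rho_i,\rho_j\}$ for $\beta\ge 1/2$.
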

\begin{proof} We follow to the general abstract theory for 
FETI-DP methods developed in Theorem 6.35 of \cite{MR2104179}. This abstract
theory relies only on duality and linear algebra arguments, and     
properties such as that $B_\triangle$ is full rank, $P_\triangle$ is a
projection, $\tilde{S}$ is invertible and the subspace inclusion  (\ref{inclusion}). 
Using the same abstract arguments, the proof of the theorem 
follows by checking the Lemma \ref{lemma43}
and Lemma \ref{lemma45}, see below. The proof of   Lemma \ref{lemma45}
is not algebraic and it  depends on the problem. 
 The proofs of these two lemmas are presented
separately below.
\end{proof}

\begin{lemma} \label{lemma43}
For $u_\triangle \in {W}_\triangle(\Gamma^\prime)$ it follows that 
\begin{equation}
\langle \tilde{S} u_\triangle, u_\triangle \rangle\;\; \leq \;\; \langle {S}^\prime_\triangle u_\triangle, 
u_\triangle \rangle.
\end{equation}
\end{lemma}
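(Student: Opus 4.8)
The plan is to derive variational (energy-minimization) characterizations for the quadratic forms on the two sides and then to observe that the feasible set for the left-hand form is a superset of the one for the right-hand form, so its minimum is no larger. No estimates involving $H/h$ or the coefficients $\rho_i$ will be needed; this is a purely algebraic/variational comparison.

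First I would invoke Lemma~\ref{lemma41}. Since $\tilde{A}$ is the assembly of the local matrices $A^\prime_i$, one has $\langle\tilde{A}w,w\rangle=\sum_{i=1}^N a^\prime_i(w_i,w_i)$ for $w\in\tilde{W}(\Omega^\prime)$, and membership in $\tilde{W}(\Omega^\prime)$ means exactly that the corner values at $\mathcal{V}^\prime_i$ are continuous across subdomains (Definition~\ref{defini2}). Hence
\[
\langle \tilde{S}\, u_\triangle, u_\triangle\rangle
= \min \sum_{i=1}^N a^\prime_i(w_i,w_i),
\]
where the minimum is taken over $w=(w_I,w_\Pi,w_\triangle)\in\tilde{W}(\Omega^\prime)$ with $w_\triangle=u_\triangle$, so that the interior values $w_I$ and the corner values $w_\Pi$ are free apart from the continuity constraint on the latter.

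Next I would produce the analogous characterization of the right-hand side. Because ${S}^\prime_\triangle=\mathrm{diag}\{{S}^\prime_{i,\triangle}\}_{i=1}^N$ and ${S}^\prime_{i,\triangle}$ is obtained from ${S}^\prime_i$ by deleting the rows and columns indexed by $\mathcal{V}^\prime_i$ (that is, it is ${S}^\prime_i$ evaluated on $\Gamma^\prime_i$-vectors whose corner entries are set to zero), combining this with the local minimization property (\ref{minloc}) of ${S}^\prime_i$ gives, for each $i$,
\[
\langle {S}^\prime_{i,\triangle}\, u_{i,\triangle}, u_{i,\triangle}\rangle
= \min a^\prime_i(w_i,w_i),
\]
where the minimum runs over $w_i\in W_i(\Omega^\prime_i)$ subject to $w_{i,\triangle}=u_{i,\triangle}$ and $w_{i,\Pi}=0$. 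Summing over $i$ yields $\langle {S}^\prime_\triangle u_\triangle,u_\triangle\rangle=\min\sum_i a^\prime_i(w_i,w_i)$, where now the local minimizations are independent and the corner values are pinned to zero rather than merely required to be continuous.

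The conclusion then follows by comparing feasible sets. Any configuration whose corner values all vanish is in particular continuous across subdomains, so it belongs to $\tilde{W}(\Omega^\prime)$; hence the constrained minimizer realizing $\langle {S}^\prime_\triangle u_\triangle,u_\triangle\rangle$ is an admissible competitor in the (less constrained) minimization defining $\langle \tilde{S} u_\triangle,u_\triangle\rangle$. Since both energies are the same objective $\sum_i a^\prime_i(w_i,w_i)$ and the left-hand minimization ranges over a superset of the right-hand feasible set, the minimum on the left is no larger, which is exactly $\langle \tilde{S}u_\triangle,u_\triangle\rangle\le\langle {S}^\prime_\triangle u_\triangle,u_\triangle\rangle$. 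I expect the only genuinely delicate point to be the bookkeeping of degrees of freedom: one must read the definition of ${S}^\prime_{i,\triangle}$ correctly as the principal submatrix of ${S}^\prime_i$ obtained by zeroing the corner dofs $\mathcal{V}^\prime_i$ (not as a further Schur complement in those variables), and then recognize that this zero-corner state is automatically continuous and therefore feasible for the assembled problem. Once both energies are written as minima of the same objective over nested constraint sets, the inequality is immediate.
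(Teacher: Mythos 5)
Your proof is correct and follows essentially the same route as the paper: the paper also invokes Lemma~\ref{lemma41} to write $\langle \tilde{S} u_\triangle, u_\triangle\rangle = \min \langle \tilde{A}w, w\rangle$ over $w=(w_I,w_\Pi,w_\triangle)\in\tilde{W}(\Omega^\prime)$ with $w_\triangle=u_\triangle$, compares it with the same minimization under the additional constraint $v_\Pi=0$, and identifies that more constrained minimum with $\langle {S}^\prime_\triangle u_\triangle, u_\triangle\rangle$, so the inequality is the nested-feasible-sets observation you make. Your extra care in deriving the right-hand characterization from (\ref{minloc}) subdomain by subdomain, and in reading ${S}^\prime_{i,\triangle}$ as the principal submatrix of ${S}^\prime_i$ with the $\mathcal{V}^\prime_i$ rows and columns deleted, merely spells out a step the paper states in one line.
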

\begin{proof} 
The proof follows from Lemma \ref{lemma41} and from 
\begin{equation}\label{430}
\langle \tilde{S} u_\triangle, u_\triangle \rangle = 
\min \langle \tilde{A}w, w \rangle\;\; \leq\;\;
 \min \langle \tilde{A}v, v \rangle\;\;  = \;\; \langle {S}^\prime_\triangle u_\triangle,u_\triangle \rangle
\end{equation}
where the minima are taken over  $w = (w_I, w_\Pi, w_\triangle) \in 
\tilde{W}(\Omega^\prime)$ such that $w_\triangle = u_\triangle$, 
and $v = (v_I, v_\Pi, v_\triangle) \in \tilde{W}(\Omega^\prime)$ such that
 $v_\Pi = 0$ and $v_\triangle = u_\triangle$.
\end{proof} 

\begin{lemma} \label{lemma45}
For any $u_\triangle \in {W}_\triangle(\Gamma^\prime)$  it holds that
\begin{equation} \label{pdeltain}
\parallel P_\triangle u_\triangle \parallel^2_{{S}_\triangle^\prime} \leq 
C(1 + \log \frac{H}{h})^2
\parallel u_\triangle \parallel^2_{\tilde{S}} 
\end{equation}
where $C$ is a positive constant independent of $h$ and  $ H$, $u_\triangle$
and the 
jumps of $\rho_i$.
\end{lemma}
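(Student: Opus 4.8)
The plan is to exploit the block-diagonal structure of $S^\prime_\triangle$ and reduce the global bound to a sum of local, edge-by-edge estimates, each carrying a single factor $(1+\log\frac{H}{h})^2$. Since $S^\prime_\triangle = \mathrm{diag}\{S^\prime_{i,\triangle}\}$, we have
\begin{equation}
\| P_\triangle u_\triangle \|^2_{S^\prime_\triangle} = \sum_{i=1}^N \langle S^\prime_{i,\triangle}(P_\triangle u_\triangle)_i, (P_\triangle u_\triangle)_i\rangle ,
\end{equation}
so it suffices to control each local term. For the right-hand side I would work with the distinguished extension realizing the $\tilde S$-norm: by Lemma \ref{lemma41} there is $\tilde u = (\tilde u_I,\tilde u_\Pi,u_\triangle)\in\tilde W(\Omega^\prime)$, discrete harmonic in the sense of $\mathcal H^\prime_i$ and continuous at the corners $\mathcal V^\prime_i$, with $\sum_{i=1}^N a^\prime_i(\tilde u,\tilde u)=\|u_\triangle\|^2_{\tilde S}$. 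Its traces agree with $u_\triangle$ at all $\triangle$-nodes, and, crucially, the corner continuity of $\tilde u$ forces the relevant jumps to vanish at the endpoints of each edge.

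First I would use the explicit formulas (\ref{Pdelta1})--(\ref{Pdelta4}) to identify $(P_\triangle u_\triangle)_i$ on each edge as the scaled jump $\frac{\rho_j^\beta}{\rho_i^\beta+\rho_j^\beta}\big[(\tilde u_i)-(\tilde u_j)\big]$ restricted to $E_{ijh}$ and $E_{jih}$, a quantity that vanishes on $\mathcal V_i^\prime$. Using the minimal-energy characterization (\ref{minloc}) of $S^\prime_{i,\triangle}$, the local term is bounded by $a^\prime_i$ evaluated on any extension of this edge datum vanishing on $\mathcal V_i^\prime$, for which I would take the $\mathcal H^\prime_i$-discrete harmonic extension. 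The task then reduces to an edge estimate of the form
\begin{equation}
\Big\| \mathcal H^\prime_i\Big[ \frac{\rho_j^\beta}{\rho_i^\beta+\rho_j^\beta}\,\vartheta_{E}\big((\tilde u_i)-(\tilde u_j)\big)\Big]\Big\|^2_{a^\prime_i} \leq C\big(1+\log\tfrac{H}{h}\big)^2\big(\rho_i | \tilde u_i |^2_{*,i} + \rho_j|\tilde u_j|^2_{*,j}\big),
\end{equation}
where $\vartheta_E$ is an edge cut-off and $|\cdot|_{*,i}$ is the appropriate local broken seminorm. Coefficient robustness then comes from the elementary bound $\rho_i\big(\rho_j^\beta/(\rho_i^\beta+\rho_j^\beta)\big)^2\le\min(\rho_i,\rho_j)$, valid for $\beta\ge 1/2$, so that the weights reassemble into the local energies $a^\prime_i(\tilde u,\tilde u)$; summing over $i$ and over the edges incident to each $\Omega_i$ then yields $C(1+\log\frac{H}{h})^2\|u_\triangle\|^2_{\tilde S}$.

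The genuine difficulty is the edge estimate itself, because here every space is a full DG space: the traces are double-valued, the functions are discontinuous across the fine elements, and the classical discrete Sobolev (``edge lemma'') bounds---$H^{1/2}_{00}$-type control of a cut-off times an edge difference, with the logarithmic factor---are not available verbatim. My plan is to borrow them through the interpolation operators constructed for the analysis, which map the fine DG space on $\Omega_i$ to continuous piecewise linear functions on a refinement of the fine mesh and back, together with the norm equivalences of Lemma \ref{lemmaequiv}. I would transfer the edge datum to the continuous refined space, apply the classical $H^{1/2}_{00}$ edge lemma there (the sole source of the $(1+\log\frac{H}{h})^2$ factor), and transfer back, relying on the stability of the interpolation operators to keep all constants independent of $h_i$, $H_i$ and the jumps; the corner continuity of $\tilde u$ is precisely what places the edge difference in $H^{1/2}_{00}$ as the edge lemma requires.

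The main obstacle, then, is establishing this DG edge lemma uniformly in the coefficients and in the double-layer structure: one must handle simultaneously the $E_{ij}$ and $E_{ji}$ contributions (the cross terms between (\ref{Pdelta1})--(\ref{Pdelta4})) and verify that passing through the interpolation operators does not degrade the logarithmic bound. Once this local estimate is secured, the reduction above is routine, and the detailed verification would be deferred to the technical section and the Appendix where the interpolation operators and Lemma \ref{lemmaequiv} are developed.
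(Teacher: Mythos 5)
Your proposal is correct and follows essentially the same route as the paper's proof: the corner-continuous discrete harmonic representative realizing $\|u_\triangle\|_{\tilde{S}}$ from Lemma \ref{lemma41}, the explicit jump formulas (\ref{Pdelta1})--(\ref{Pdelta4}) with the weight bound $\rho_i\rho_j^{2\beta}/(\rho_i^\beta+\rho_j^\beta)^2\le\min(\rho_i,\rho_j)$ for $\beta\ge 1/2$, and the transfer to the conforming refined space via $\underline{I}_h^{i}$, $\underline{I}_h^{i,j}$ and Lemma \ref{lemmaequiv}, where the classical $H^{1/2}_{00}$ edge estimates supply the $(1+\log\frac{H}{h})^2$ factor. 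The only minor variant is your normalization device: where you propose an edge cut-off $\vartheta_E$ applied to the jump (legitimate, since corner continuity makes the jump vanish at edge endpoints), the paper instead subtracts the edge-wise linear interpolant $\hat{u}$ of $u$ at the corners, which lies in $\hat{W}(\Gamma^\prime)$ so that $B_\triangle\hat{u}_\triangle=0$ and $P_\triangle u_\triangle = P_\triangle(u_\triangle-\hat{u}_\triangle)$, giving the same vanishing-at-endpoints structure needed to split the scaled jump into single-subdomain $H^{1/2}_{00}$ terms.
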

The proof of this lemma is presented in the second part of next section.

\section{Refinement and Interpolations}

 In this section we introduce some technical tools to analyze 
the FETI-DP precondtioner. \\

\begin{definition}[The triangulation
  $\underline{\cal{T}}_{h}^i(\Omega_i)$] \label{definitri}   Let us introduce the triangulation $
\underline{\cal{T}}_{h}^i$, 
which is a refinement of ${\cal{T}}_h^i(\Omega_i)$ as follows.  
We refine an element  $\tau \in {\cal{T}}_h^i(\Omega_i)$ by 
considering four cases: \\
\begin{itemize}
\item {\normalfont No edge of $\tau$ belongs to $\partial \Omega_i$:}  Let 
$V_1$, $V_2$ and $V_3$ be the vertices of $\tau$ and let 
us denote $(\lambda_1(x),\lambda_2(x),\lambda_3(x))$ as the  
barycenter coordinates of a point $x \in \bar{\tau}$ such that 
$V_1 = (1,0,0)$, $V_2 =(0,1,0)$ and $V_3 = (0,0,1)$. Let 
$M_1 = (0,1/2,1/2)$, $M_2 = (1/2,0,1/2)$, $M_3 = (1/2,1/2,0)$ and
$C_1 = (2/3,1/6,1/6)$, $C_2 = (1/6,2/3,1/6)$, $C_3 = (1/6,1/6,2/3)$.
 The refinement of $\tau$ is defined by the triangles $C_1C_2C_3$, 
 $V_1M_3C_1$, $M_3 C_2 C_1$, $M_3 V_2 C_2$, $V_2M_1C_2$,
 $M_1V_3C_2$,
$M_1V_3C_3$, $V_3M_2C_3$, $M_2 C_1C_3$ and $M_2V_1 C_1$.  
See Figure \ref{fig:ref}, upper-left picture.

\item {\normalfont Only one edge of $\tau$ belongs to $\partial \Omega_i$:}  
 Let $V_1$, ${V}_2$ and ${V}_3$ be the vertices of $\tau$ and let us assume
that the edge opposed to $V_1$ belongs to $\partial \Omega_i$. Put
 $V_1 = (1,0,0)$, ${V}_2=(0,1,0)$, ${V}_3=(0,0,1)$ and let 
$M_2 = (1/2,0,1/2)$, $M_3 = (1/2,1/2,0)$,  
$C_1 = (1/3,1/3,1/3)$, $C_2 = (0,2/3,1/3)$ and $C_3 = (0,1/3,2/3)$. 
 The refinement of $\tau$
is defined by the triangles $V_1 M_3 C_1$,  $M_3 {V}_2 C_1$,
${V}_2C_2C_1$,
$C_2C_3C_1$, $C_3{V}_3C_1$, ${V}_3 M_2C_1$ and $ M_2 V_1
C_1$.  See Figure \ref{fig:ref}, upper-right picture.

\item {\normalfont Exactly two edges of $\tau$ belong to $\partial  \Omega_i$:} Let 
$C_1$, ${V}_2$ and ${V}_3$ be the vertices of $\tau$ and let 
us assume the edges opposed to $V_2$ and $V_3$ belong to $\partial \Omega_i$. 
We have $C_1 = (1,0,0)$, ${V}_2 = (0,1,0)$, ${V}_3 = (0,0,1)$, and define
 $M_1 = (0,1/2,1/2)$ and $C_2 = (1/2,0,1/2)$, $C_3 = (1/2,1/2,0)$.
The refinement of $\tau$ is defined by the triangles 
$C_1 C_3 C_2$, $C_3{V}_2 M_1$, $M_1{C}_3 C_2$ and  
$M_1V_3C_3$.  See Figure \ref{fig:ref}, lower-left picture.

\item {\normalfont All  three  edges of $\tau$ belong to $\partial \Omega_i$:}
 No refinement is needed,  let $C_1$, $C_2$ and $C_3$ be the 
vertices of $\tau$. \\
\end{itemize} 
\end{definition}

We distinguish  the above nodes by 
saying that they are nodes of type $C$, type $M$ and type $V$, respectively. 
It is easy to see that $\underline{\cal{T}}_{h}^i$,  the refinement
of ${\cal{T}}_{h}^i$,  is 
a geometrically conforming, shape regular and quasi-uniform triangulation  of $\Omega_i$.
Therefore, 
we denote by $\underline{W}_i(\Omega_i)$ as the 
space of piecewise linear and continuous functions on 
$\underline{\cal{T}}_{h}^i$.
 
\begin{definition}[The space
  $\underline{W}_i(\Omega_i^\prime)$] \label{definiunder} Let $\underline{W_i}(\bar{E}_{ji})$ be the space 
$\underline{W}_j(\Omega_j)$ restricted to $\bar{E}_{ji}$, for $j \in 
{\cal{J}}_H^{i,0}$. We define 
$\underline{W}_i(\Omega_i^\prime)$ by 
\begin{equation} \label{defiwi2}
\underline{W}_i({\Omega}^\prime_i) = \underline{W}_i({\Omega}_i) \times
\prod_{j \in {\cal{J}}_{H}^{i,0}} \underline{W}_i(\bar{E}_{ji}).
\end{equation} 
\end{definition}

A function $ \underline{u}_i \in \underline{W}_i({\Omega}^\prime_i)$ 
is represented by 
\[
\underline{u}_i = \{(\underline{u}_i)_i, \{(\underline{u}_i)_j\}_{j \in {\cal{J}}_H^{i,0}} \},
\]
where $(\underline{u}_i)_i := \underline{u}_{i\, | \overline{\Omega}_i}$ ($\underline{u}_i$ 
restricted to $\overline{\Omega}_i$) and 
$(\underline{u}_i)_j := \underline{u}_{i\, | \bar{E}_{ji}}$ ($\underline{u}_i$ restricted
to  $\bar{E}_{ji}$). Note that $(\underline{u}_i)_i$ and
$(\underline{u}_i)_j $ are continuous on $\overline{\Omega}_i$ and $\bar{E}_{ji}$, repectively. 
\\

\begin{figure}[htb]
\centering
{\psfig{figure=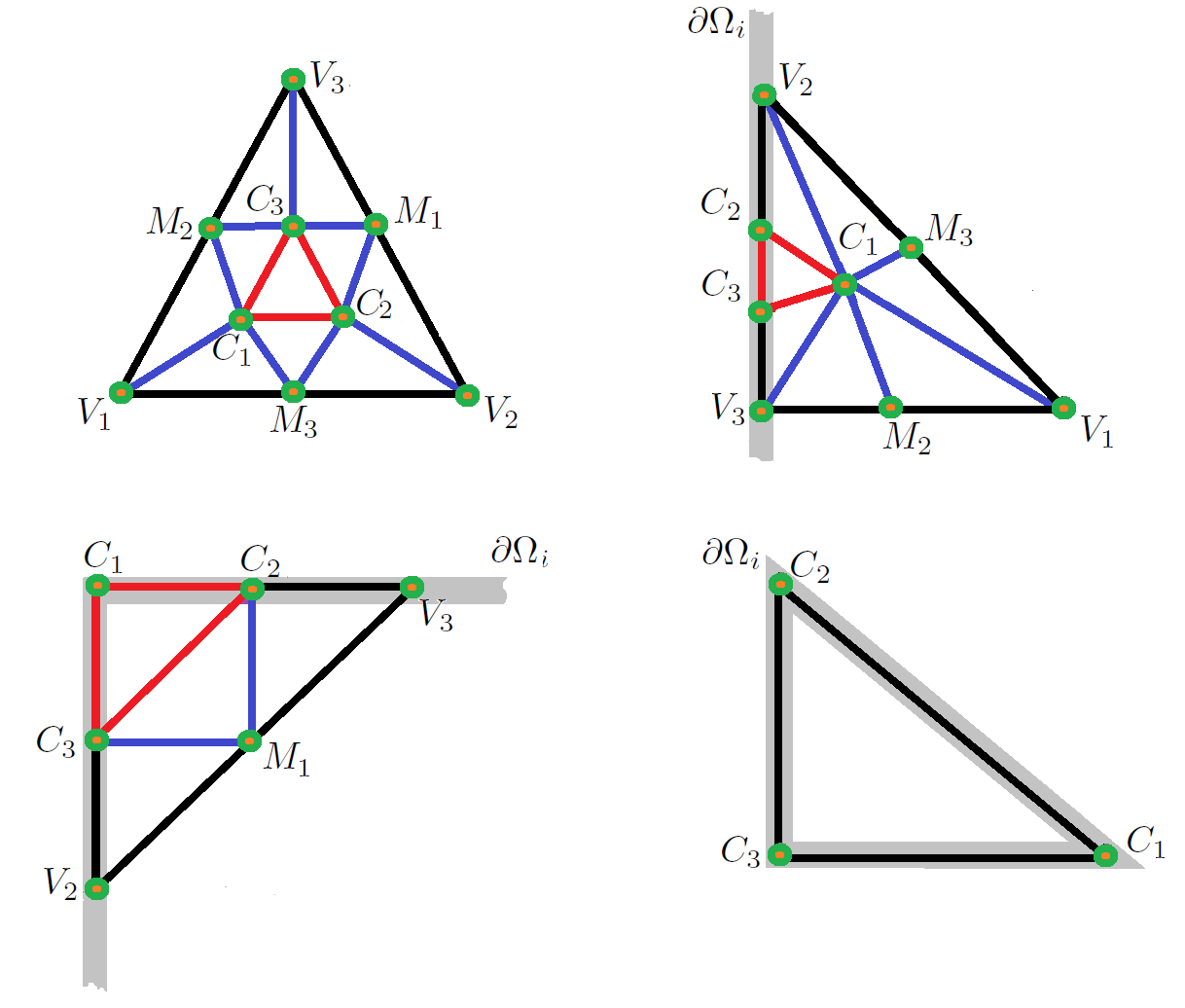,height=10cm,width=12cm,angle=0}}
\caption{Illustration of refinement of an element according to its relative 
position with respect to $\partial\Omega_i$.}
\label{fig:ref}
\end{figure}

\begin{definition}[The interpolators
$\underline{I}_{h}^i$] \label{definiunder3}
Given 
$u_i = \{({u}_i)_i, \{({u}_i)_j\}_{j \in {\cal{J}}_H^{i,0}} \} \in {W}_i(\Omega_i^\prime)$, we construct \[
\{(\underline{u}_i)_i,
\{(\underline{u}_i)_j\}_{j   \in {\cal{J}}_H^{i,0}} \} =
\underline{I}_{h}^i u_i
\in  \underline{W}_i(\Omega_i^\prime)~~~~\mbox{as follows:}
\]
\begin{itemize} 
\item Define $(\underline{u}_i)_i \in \underline{W}_i(\Omega_i)$ by assingning
values at the nodes of type $C,M,V \in \underline{\cal{T}}_h^i(\overline{\Omega}_i)$: 
\begin{itemize} 
\item Nodes of type $C$: define $(\underline{u}_i)_i(C) =
  ({u}_i)_i(C)$. 
\item Nodes of type $M$ (with $M$ shared by 
${\tau}_{-},{\tau}_{+} \in {\cal{T}}_h^i(\Omega_i)$: define  $(\underline{u}_i)_i(M) = 1/2(u_{\tau_{+}}(M) + u_{\tau_{-}}(M))$.
\item Nodes of type ${V} \notin \partial \Omega_i$ (with $V$
  shared by the elements  
$\tau_1, \cdots, \tau_p \in {\cal{T}}_h^i$): define 
$(\underline{u}_i)_i(V) = 1/p\;(u_{\tau_1}(V)+\cdots+
u_{\tau_p}(V))$.
\item Nodes of type ${V} \in \partial \Omega_i$ (with ${V}$ shared by two
  element edges $e_r, e_\ell \subset \partial \Omega_i$): 
define $(\underline{u}_i)_i({V}) =  1/2\;(u_{\tau_r}({V})+
u_{\tau_\ell}({V}))$, where the elements $\tau_r$ and $\tau_\ell$ share  
the edges $e_r$ and $e_\ell$, respectively. 
\end{itemize} 
\item  For $j \in {\cal{J}}_H^{i,0}$ define $(\underline{u}_i)_j \in
  \underline{W}_i(\overline{E}_{ji})$ by assingning
nodal values at the nodes of type $C,V$ on
$\underline{\cal{T}}_h^j(\bar{E}_{ji})$:  
\begin{itemize} 
\item Nodes of type $C$ define $(\underline{u}_i)_j(C) = ({u}_i)_j(C)$
\item Nodes of type ${V}$ define  $(\underline{u}_i)_j({V})$ according to 
\begin{itemize}
\item If $V\in \partial \bar{E}_{ji}$, 
 define $(\underline{u}_i)_j({V}) = \lim_{(x \in E_{ji})
  \rightarrow {V}} (u_i)_j(x)$.
\item If $V\not \in \partial \bar{E}_{ji}$, define
 $(\underline{u}_i)_j({V}) =
  1/2\;( u_{e_r} ({V})+ u_{e_\ell }({V}))$, where $e_r$ and
  $e_\ell$ are the two  edges of ${\cal{E}}_h^{j,i}$ sharing ${V}$. \\
\end{itemize}
\end{itemize}
\end{itemize} 
\end{definition}

  Note that from the definition of $\underline{u}_i =
 \underline{I}_{h}^i u_i$, the value of $(\underline{u}_i)_i$ 
  on $\partial \Omega_i$
 depends only on the value of $(u_i)_i$ on $\partial
 \Omega_i$. In addition, the values of $(u_i)_j$  depends only on the value of $(u_i)_j$ on
 $E_{ji}$ for any $j \in {\cal{J}}_{H}^{i,0}$. 
We note however that the value of $(\underline{u}_i)_i$ on an edge 
 $E_{ij}$ does not necessary depend only on the values of $(u_i)_i$ on $E_{ij}$ due to
the way that the nodal values at the subdomain corners are assigned.

  For a fixed edge $j \in{\cal{J}}_{H}^{i}$, we next modify
$\underline{I}_{h}^i$ (denoted by $\underline{I}_{{h}}^{i,j}$) 
so that $\underline{u}_i := \underline{I}_{{h}}^{i,j}  u_i$ 
on $E_{ij}$ depends only on the value of  $(u_i)_i$ on $E_{ij}$. We note
however that $ (\underline{u}_i)_i$ on $E_{ik}$, for $ k \in
{\cal{J}}_{H}^{i}$ and $k \neq j$, { does not depend 
 on the values of $(u_i)_i$ on $E_{ik}$}. The other properties described 
above for   $\underline{I}_{{h}}^{i}$ also holds for $\underline{I}_{{h}}^{i,j}$.  
We note that in the case of Figure \ref{fig:ref}, lower-left picture, the operator 
$\underline{I}_{{h}}^{i}$ would be enough in order to perform the analysis 
of the FETI-DP method. The operator  $\underline{I}_{{h}}^{i,j}$ is needed 
only in cases where two or more elements touch simultaneouly a corner of
$\Gamma_i$. \\ 

\begin{definition}[The interpolator
  $\underline{I}_{{h}}^{i,j}$] We introduce 
the interpolator $\underline{I}_{{h}}^{i,j} :
 {W}_i(\Omega_i^\prime) \rightarrow \underline{W}_i(\Omega_i^\prime)$, by modifying $\underline{I}_{{h}}^i$ 
only for the definition of $(\underline{u}_i)_i$ at the two endpoints
of $\partial E_{ij}$. For ${V}$ an endpoint of  $\partial
E_{ij}$ define $(\underline{u}_i)_i({V}) = \lim_{(x \in E_{ij})
  \rightarrow {V}} (u_i)_i(x)$. \\
\end{definition} 

 Let us define the following bilinear forms for the space
 $\underline{W}_i(\Omega_i^\prime)$
\begin{equation} \label{aprimei2}
\underline{a}^\prime_{i}(\underline{u}_i, \underline{v}_i)  = \underline{a}_i(\underline{u}_i , \underline{v}_i) + 
s_{i,\partial}(\underline{u}_i,\underline{v}_i) + p_{i,\partial}(\underline{u}_i,\underline{v}_i)
\end{equation}
and the semi-norm
\[
\underline{d}_i(\underline{u}_i,\underline{u}_i) = 
\underline{a}_i(\underline{u}_i,\underline{u}_i) + 
{p}_{i,\partial}(\underline{u}_i,\underline{u}_i)
\]
where 
\[
\underline{a}_i(
\underline{u}_i,\underline{u}_i): = \int_{\Omega_i} \rho_i \nabla
(\underline{u}_i)_i \cdot  \nabla (\underline{u}_i)_i \,dx.
\]

 The following lemma holds. From here on, in order to avoid the proliferation  of constants, 
we use the notation $A\preceq B$ to express the fact that there is a constant $C$  independent 
of $h$ such that 
$A\leq C A$. Similarly for the  symbol $A\asymp B$ which means that 
$A\preceq B$ and also $B\preceq A$.
\begin{lemma} \label{underlineII}   
For $u_i \in W_i(\Omega^\prime_i)$ and let $\underline{u}_i = 
\underline{I}_{{h}}^i u_i$  or $\underline{u}_i = 
\underline{I}_{{h}}^{i,j} u_i$. Then 
\begin{equation}\label{eq:firstinequality}
\underline{a}_i((\underline{u}_i)_i,(\underline{u}_i)_i) \asymp 
{a}_i(({u}_i)_i,({u}_i)_i),
\end{equation}
and 
\begin{equation}\label{eq:secondinequality}
{p}_{\partial, i}(\underline{u}_i,\underline{u}_i) 
\preceq 
{p}_{\partial, i}({u}_i,{u}_i)) + 
{a}_i(({u}_i)_i,({u}_i)_i),
\end{equation}
\begin{equation}\label{5.2_1}
{p}_{\partial, i}({u}_i,{u}_i)  \preceq 
{p}_{\partial, i}(\underline{u}_i,\underline{u}_i) 
+ 
\underline{a}_i((\underline{u}_i)_i,(\underline{u}_i)_i).
\end{equation}
Additionally, if $\underline{u}_i = 
\underline{I}_{{h}}^{i,j} u_i$, then
\begin{equation}\label{eq:fourthinequality}
\|(u_i)_i - (u_i)_j\|_{L^2(E_{ij})} \asymp 
\|(\underline{u}_i)_i - (\underline{u}_i)_j\|_{L^2(E_{ij})}.
\end{equation}
\end{lemma}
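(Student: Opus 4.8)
The plan is to reduce each of the four estimates to \emph{local}, element-by-element bounds fed through two elementary facts. First, for a linear function $w$ on a shape-regular triangle $T$ of diameter $\asymp h$ one has $\|\nabla w\|_{L^2(T)}^2 \asymp \sum_{p,q}(w(p)-w(q))^2$, the sum ranging over the vertices of $T$ (the area factor $h^2$ cancelling the $h^{-2}$ from $|\nabla w|^2$). Second, the usual trace and inverse inequalities hold on both $\mathcal{T}_h^i$ and its refinement $\underline{\mathcal{T}}_h^i$, which, as noted after Definition \ref{definitri}, is again shape-regular and quasi-uniform. By Definition \ref{definiunder3} every nodal value of $\underline{u}_i$ is either a copy of $(u_i)_i$ at a type-$C$ node or a convex average of boundedly many elemental traces at a type-$M$ or type-$V$ node; hence any nodal-value difference on a refined element telescopes into (a) differences of a single affine piece $u_\tau$ evaluated at two points of the \emph{same} coarse element $\tau$, which are $O(h\,|\nabla u_\tau|)$ and therefore controlled by $a_i$, and (b) genuine jumps $u_\tau-u_{\tau'}$ across shared coarse edges, which by a trace inequality are controlled by the interior penalty $p_{i,0}$. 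This splitting is the engine behind all the bounds.

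For (\ref{eq:firstinequality}) I would argue the two directions separately. For the lower bound $a_i \preceq \underline{a}_i$, note that on the subtriangle spanned by the type-$C$ nodes $C_1,C_2,C_3$ of $\tau$ the function $\underline{u}_i$ interpolates the affine $u_\tau$ at three affinely independent points, so $\underline{u}_i \equiv u_\tau$ there; since that subtriangle has area $\asymp|\tau|$, comparing $\int_{C_1C_2C_3}|\nabla \underline{u}_i|^2$ with $\int_\tau|\nabla u_\tau|^2$ gives the claim. For the upper bound I apply the norm equivalence on each refined element and insert the telescoping split above; summing over $\underline{\mathcal{T}}_h^i$, the type-(a) contributions assemble into $a_i(u_i,u_i)$ and the type-(b) contributions into $p_{i,0}(u_i,u_i)$. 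The clean equivalence of (\ref{eq:firstinequality}) is thus really the statement that the continuous gradient energy of $\underline{u}_i$ captures both the broken gradient energy and the interior jumps of $u_i$, the interior-penalty contribution being recovered on the return by absorbing the gradient correction through $a_i \preceq \underline{a}_i$.

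For the penalty estimates (\ref{eq:secondinequality}) and (\ref{5.2_1}) I would work on the interface edges $E_{ij}$, $j\in\mathcal{J}_H^{i,0}$, and compare the continuous trace of $(\underline{u}_i)_i$, respectively $(\underline{u}_i)_j$, with the piecewise-linear DG trace of $(u_i)_i$, respectively $(u_i)_j$. Along such an edge the two traces differ only at the type-$V$ boundary nodes, where $\underline{u}_i$ takes the average of the two adjacent elemental values; each such difference is half a jump and is absorbed, via the same split and a one-dimensional trace inequality along $\partial\Omega_i$, into $a_i$ (and $p_{i,0}$). Writing $p_{i,\partial}$ as a weighted $L^2$-norm of $(u_i)_i-(u_i)_j$ on $E_{ij}$ and using the triangle inequality then yields both directions. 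For (\ref{eq:fourthinequality}), the modified operator $\underline{I}_h^{i,j}$ is designed precisely so that on $E_{ij}$ the values of $(\underline{u}_i)_i$ depend only on $(u_i)_i|_{E_{ij}}$ and agree with the true one-sided trace limits at the two endpoints of $\partial E_{ij}$; the $L^2$-equivalence then follows from the one-dimensional mass-matrix equivalence $\|v\|_{L^2(E_{ij})}^2 \asymp h\sum_k v_k^2$ on the quasi-uniform edge meshes (the $v_k$ being the nodal values), together with the fact that the node-averaging relating the $\mathcal{T}_h$ and $\underline{\mathcal{T}}_h$ traces is bounded and boundedly invertible once the endpoints are pinned, applied to $(u_i)_i$ and $(u_i)_j$ separately.

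The main obstacle is the treatment of the corners of $\Gamma_i$. When several coarse elements meet at such a corner, the plain operator $\underline{I}_h^i$ averages across edges belonging to different interfaces, so $(\underline{u}_i)_i$ on $E_{ij}$ would depend on data off $E_{ij}$; this destroys the edge-local character needed for (\ref{eq:fourthinequality}) and pollutes (\ref{eq:secondinequality}). Replacing the corner values by the one-sided trace limits, as in $\underline{I}_h^{i,j}$, restores localization, and the delicate point is to carry out the vertex telescoping so that all constants remain independent of $h$ \emph{and} of the number of elements touching the corner. Securing this uniformity — exactly the improvement over the restriction on the number of elements at a corner mentioned in the Introduction — is where the care is required.
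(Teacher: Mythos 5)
Your overall machinery is the same as the paper's Appendix A: nodal-value equivalence for linears on shape-regular triangles, telescoping of nodal differences into within-element gradient contributions and inter-element jumps, the observation that $\underline{u}_i$ coincides with $u_\tau$ on the central subtriangle $C_1C_2C_3$ (giving the clean lower bound $a_i \preceq \underline{a}_i$, the paper's (\ref{A.9})), edge-local one-dimensional norm equivalences for the penalty terms, and the role of $\underline{I}_h^{i,j}$ in pinning the endpoint values so that (\ref{eq:fourthinequality}) becomes local to $E_{ij}$. On (\ref{eq:firstinequality}) you are in fact faithful to what the paper actually derives: its estimates (\ref{A3})--(\ref{A6}) also produce the $h^{-1}$-weighted interior jump terms, i.e., the bound $\underline{a}_i \preceq a_i + p_{i,0}$, and your observation that the penalty contribution is unavoidable there is correct (piecewise constants with nonzero jumps show the clean one-sided bound $\underline{a}_i \preceq a_i$ cannot hold). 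Your closing sentence about the interior-penalty contribution being ``recovered on the return'' is not an argument, however; it neither repairs the stated equivalence nor is needed for the paper's application, where the lemma is only invoked against $d_i$, which contains $p_{i,0}$.

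The one genuine defect is in your treatment of (\ref{eq:secondinequality}). You compare $(\underline{u}_i)_i$ with $(u_i)_i$ and $(\underline{u}_i)_j$ with $(u_i)_j$ separately and then use the triangle inequality; the discrepancy at a type-$V$ node on $E_{ij}$ is then an \emph{intra-subdomain} jump between the two boundary elements meeting at $V$, which telescopes into interior-edge jumps and hence lands in $p_{i,0}(u_i,u_i)$. That proves only $p_{i,\partial}(\underline{u}_i,\underline{u}_i) \preceq p_{i,\partial}(u_i,u_i) + a_i(u_i,u_i) + p_{i,0}(u_i,u_i)$, which is strictly weaker than (\ref{eq:secondinequality}) as stated, since $p_{i,0}$ is not controlled by $a_i + p_{i,\partial}$ for DG functions. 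The paper avoids this by never splitting: it estimates the difference $\underline{u}_i - \underline{u}_j$ directly and exploits that the meshes match across the interface, so the same two sub-edges $e,\tilde{e}$ meet at $V$ on both sides and the averaging of Definition \ref{definiunder3} pairs up, giving $\underline{u}_i(V) - \underline{u}_j(V) = \tfrac12\big[(u_i-u_j)|_{e}(V) + (u_i-u_j)|_{\tilde{e}}(V)\big]$; the intra-subdomain jumps cancel, each term is absorbed into $h^{-1}\|u_i-u_j\|^2_{L^2}$ on $e$ and $\tilde{e}$, and only at corners of $\Gamma_i$ does an extra $\|\nabla u_i\|^2_{L^2(\tau)}$ term appear, which is exactly the $a_i$ term on the right of (\ref{eq:secondinequality}). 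Your weaker bound would still suffice for the only place the lemma is used (the proof of Lemma \ref{lemmaequiv}, where the target norm is $d_i$), but as a proof of the lemma as stated, this step fails and the pairing idea is the missing ingredient.
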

The proof of this and next lemma are presented in the Appendix.

\begin{definition} \label{defininounder} [The interpolator
  $I_h^i$] We now introduce the interpolator $I_h^i: \underline{W}_i(\Omega_i^\prime)\rightarrow 
W_i(\Omega_i^\prime)$ as follows. Given $\underline{u}_i =  \{(\underline{u}_i)_i,
\{(\underline{u}_i)_j\}_{j   \in {\cal{J}}_H^{i,0}} \} \in
\underline{W}_i(\Omega_i^\prime)$ we construct 
\[ 
 \{({u}_i)_i, \{({u}_i)_j\}_{j \in {\cal{J}}_H^{i,0}} \} := I_h^i\underline{u}_i
\in
{W}_i(\Omega_i^\prime) ~~~\mbox{as follows:}
\]
\begin{itemize} 
\item For $\tau \in {\cal{T}}_h^i$, let $C_1$, $C_2$, $C_3$
be the nodes of type $C$  in $\underline{\cal{T}}_{h}^i$ on
$\bar{\tau}$.  We define $(u_i)_i$ on $\tau$ as the linear extrapolation on $\tau$ of the linear function
defined by the nodal values $(\underline{u}_i)_i(C_1),
(\underline{u}_i)_i(C_2)$ and $(\underline{u}_i)_i(C_3)$. 
\item For   $j \in {\cal{J}}_H^{i,0}$ and $e \in {\cal{E}}_h^{j,i}$,
let $C_1 $and  $C_2$
be the nodes of type C on element edge $e$. We define $(u_i)_j$ on element 
edge  $e$ as the
linear extrapolation on $e$ of the 
linear function defined by the nodal values $(\underline{u}_i)_j(C_1)$ and 
$(\underline{u}_i)_j(C_2)$. \\
\end{itemize}
\end{definition} 

The following lemma holds.\\
\begin{lemma} \label{underlineI} Let $\underline{u}_i \in
  \underline{W}_i(\Omega_i^\prime)$. Then 
\begin{equation}\label{eq:lem571}
a_i(I_h^{i} \underline{u}_i, I_h^{i} \underline{u}_i)    \preceq 
\underline{a}_i( \underline{u}_i, \underline{u}_i) 
\end{equation}
and 
\begin{equation}\label{eq:lem572}
p_{i,\partial} (I_h^{i} \underline{u}_i,I_h^{i} \underline{u}_i) \preceq 
p_{i,\partial} (\underline{u}_i,\underline{u}_i). 
\end{equation}
In case $\underline{u}_i = \underline{I}_{{h}}^i u_i$ or
$\underline{u}_i = \underline{I}_{{h}}^{i,j} u_i$, we have 
then  
\begin{equation}\label{eq:lem573}
 I_h^i \underline{u}_i = u_i.
\end{equation}
\end{lemma}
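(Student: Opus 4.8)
The plan is to prove the three assertions separately, reducing the two stability estimates (\ref{eq:lem571}) and (\ref{eq:lem572}) to local computations---element by element for the energy and edge by edge for the penalty---and to exploit throughout that $\underline{\cal T}_h^i$ is obtained from the shape-regular, quasi-uniform mesh ${\cal T}_h^i$ by the fixed finite pattern of Definition \ref{definitri}. Consequently each local inequality is, after affine scaling to a reference configuration, an equivalence of norms on a fixed finite-dimensional space, with constants depending only on the shape-regularity. I would dispose of the identity (\ref{eq:lem573}) first, since it is purely definitional: by Definition \ref{defininounder} the operator $I_h^i$ reconstructs $(u_i)_i$ on each $\tau$ and $(u_i)_j$ on each $e\in{\cal E}_h^{j,i}$ using \emph{only} the values at the type-$C$ nodes. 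When $\underline u_i=\underline{I}_h^i u_i$ or $\underline u_i=\underline{I}_h^{i,j}u_i$, these type-$C$ values agree with those of $(u_i)_i$ and $(u_i)_j$ (both interpolators assign the same type-$C$ values, differing at most at corner nodes of type $V$, which $I_h^i$ ignores); since $(u_i)_i$ is affine on each $\tau$ and $(u_i)_j$ affine on each $e$, matching at the affinely independent $C$-nodes recovers them exactly, so $I_h^i\underline u_i=u_i$.

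For the energy bound (\ref{eq:lem571}) I would argue on a single $\tau\in{\cal T}_h^i$; as $\rho_i$ is constant it factors out, so it suffices to bound $\int_\tau|\nabla(u_i)_i|^2$ by $\int_{\bar\tau}|\nabla(\underline u_i)_i|^2$, where $(u_i)_i|_\tau$ is the affine function with $(u_i)_i(C_k)=(\underline u_i)_i(C_k)$ at the three type-$C$ nodes. In each of the four cases these nodes form a nondegenerate triangle of diameter $\asymp h_i$ and fixed shape, so the standard inverse estimate for affine functions gives $\int_\tau|\nabla(u_i)_i|^2\preceq\sum_{k<\ell}|(\underline u_i)_i(C_k)-(\underline u_i)_i(C_\ell)|^2$. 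Each $C$-node difference is then controlled by joining $C_k$ to $C_\ell$ through the $O(1)$ sub-triangles $\sigma$ refining $\tau$ and summing the constant tangential derivatives of $(\underline u_i)_i$ along the path; Cauchy--Schwarz together with $|\sigma|\asymp h_i^2$ yields $|(\underline u_i)_i(C_k)-(\underline u_i)_i(C_\ell)|^2\preceq\int_{\bar\tau}|\nabla(\underline u_i)_i|^2$. Summing over $\tau$, whose closures tile $\Omega_i$ with disjoint interiors, proves (\ref{eq:lem571}).

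For the penalty bound (\ref{eq:lem572}) I would work on one interface edge $e\in{\cal E}_h^{i,j}$, $j\in{\cal J}_H^{i,0}$, where the weight $\delta\rho_i/(l_{ij}h_e)$ is common to both sides. On $e$ both $(u_i)_i$ (the trace of the reconstruction on the adjacent element) and $(u_i)_j$ (the edge reconstruction from the neighbour's refinement) are affine, each obtained by sampling the continuous piecewise linear functions $(\underline u_i)_i$ and $(\underline u_i)_j$ at the fixed parametric type-$C$ points of the respective side. The crucial observation is that the discrepancy $(u_i)_i-(u_i)_j$ on $e$ is a \emph{linear} function of the jump $w:=(\underline u_i)_i-(\underline u_i)_j$ alone: altering $(\underline u_i)_i,(\underline u_i)_j$ while preserving their difference changes them by a common function that is piecewise linear with respect to both refinements of $e$, hence globally affine, and such a function is reproduced identically by both samplings, leaving the discrepancy unchanged; in particular it vanishes when $w\equiv0$. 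Since the discrepancy therefore depends linearly and scale-invariantly on the $O(1)$ nodal values of $w$ on $e$, the finite-dimensional norm equivalence gives $h_e^{-1}\int_e|(u_i)_i-(u_i)_j|^2\preceq h_e^{-1}\int_e w^2$, and summing over $e$ and $j$ yields (\ref{eq:lem572}).

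The main obstacle is exactly this penalty estimate at edges touching a corner of $\Gamma_i$, where the numbers of elements meeting the corner on the two sides may differ: the adjacent elements then fall in different cases of Definition \ref{definitri}, and the type-$C$ points sampled from $\Omega_i$ and from $\Omega_j$ no longer coincide on $e$. The naive mass-matrix comparison of sampled values fails here, and one must rely on the vanishing-discrepancy mechanism above---that the two sampling patterns agree on globally affine data, so the discrepancy factors through $w$. It is precisely to keep this mechanism available without restricting the number of elements at a corner that the modified interpolator $\underline{I}_h^{i,j}$ and the companion estimates of Lemma \ref{underlineII} (notably (\ref{eq:fourthinequality})) are introduced; checking the finite-dimensional norm equivalences uniformly across all four refinement cases is the remaining, essentially bookkeeping, step.
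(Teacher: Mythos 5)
Your proposal is correct and, in its architecture, matches the paper's Appendix B: both dispose of (\ref{eq:lem573}) as purely definitional (affine functions are recovered exactly from their type-$C$ values), reduce (\ref{eq:lem571}) to an element-by-element comparison of type-$C$ nodal differences with $\|\nabla(\underline{u}_i)_i\|_{L^2}$, and reduce (\ref{eq:lem572}) to an edge-by-edge finite-dimensional norm equivalence. Two differences are worth recording. For (\ref{eq:lem571}) the paper is shorter than your path argument: in every case of Definition \ref{definitri} the three type-$C$ nodes span a \emph{single} sub-triangle $C_1C_2C_3$ of $\underline{\cal{T}}_h^i$ on which $(\underline{u}_i)_i$ is affine, so $[\underline{u}_i(C_k)-\underline{u}_i(C_\ell)]^2 \preceq \|\nabla (\underline{u}_i)_i\|^2_{L^2(C_1C_2C_3)}$ directly and no chaining through neighbouring sub-triangles is needed (your chaining is nonetheless valid). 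For (\ref{eq:lem572}) your treatment is in fact more careful than the paper's: the paper bounds $\|I_h^i\underline{u}_i-I_h^i\underline{u}_j\|^2_{L^2(e)}$ by pairing the nodal values at $C_2^+$ with $C_2^-$ and $C_3^+$ with $C_3^-$, tacitly in the matched situation of Figure \ref{fig:A4} where the type-$C$ points on $e$ coincide from the two sides; your ``vanishing-discrepancy'' factorization through the jump $w=(\underline{u}_i)_i-(\underline{u}_i)_j$ is exactly what is needed in the mismatched configurations near corners (e.g.\ a case-2 element facing a case-3 element, with sampling points at $\{1/3,2/3\}$ versus $\{0,1/2\}$ of $e$), and it explains structurally why (\ref{eq:lem572}) holds with no $a_i$ term on the right, in contrast with (\ref{eq:secondinequality}) of Lemma \ref{underlineII}, where the averaging at type-$M$ and type-$V$ nodes forces the energy term.

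One caveat on your parenthetical claim that a function piecewise linear with respect to both refinements of $e$ is ``hence globally affine'': this is what must be verified case by case. It does hold whenever the two refinements of $e$ genuinely differ (overlapping linearity intervals force global affinity, as in the case-2/case-3 pairing), and when the refinements coincide the two sampling patterns coincide as well, so the discrepancy vanishes trivially on common data. The only conceivable exception --- case-3 elements on both sides whose corner vertices sit at \emph{opposite} endpoints of the same fine edge $e$, where the refinements agree (breakpoint at the midpoint) but the samplings $\{0,1/2\}$ and $\{1/2,1\}$ differ --- is excluded by geometric conformity except in the degenerate situation where $E_{ij}$ consists of a single fine edge, since a case-3 element on an interface edge can occur only where $\partial\Omega_i$, respectively $\partial\Omega_j$, has a corner, and those corners coincide at the endpoints of $E_{ij}$. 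So your mechanism survives the bookkeeping you correctly identify as the remaining step; the paper's own proof leaves this mismatch issue implicit behind the matched picture of Figure \ref{fig:A4}.
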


Let us also introduce $\underline{\cal{H}}_i \underline{u}_{i,\Gamma^\prime} \in 
\underline{W}_i({\Omega}^\prime_i)$
 as  the standard discrete harmonic function 
of $\underline{u}_{i,\Gamma^\prime} \in \underline{W}_i({\Gamma}_i^\prime)$, i.e.,
$\underline{\cal{H}}_i \underline{u}_{i,\Gamma^\prime} = \underline{u}_{i,\Gamma^\prime}$ on $\Gamma_i^\prime$
and discrete harmonic in ${\Omega}_i$ 
in the sense of $\underline{a}_i(\cdot,\cdot)$, see (\ref{aprimei2}), with Dirichlet data
on $\Gamma_i$. We note that the extensions 
$\underline{\cal{H}}_i$ and ${\cal{H}}^\prime_i$ differ from each other 
not only because they  are defined in  different space 
$\underline{W}_i(\Omega_i^\prime)$ and ${W}_i(\Omega_i^\prime)$, respectively,
but also because 
 $\underline{\cal{H}}_i \underline{u}_{i,\Gamma^\prime}$ at the interior nodes 
of $\underline{\cal{T}}_h^i$ 
depends only on the nodal values of $\underline{u}_{i,\Gamma^\prime}$ on $\Gamma_i$, 
while 
${\cal{H}}^\prime_i u_{i,\Gamma^\prime}$ depends on the  nodal values of  
$u_{i,\Gamma^\prime}$ on ${\Gamma}_i^\prime$. \\

The following lemma shows 
the equivalence (in the energy form defined 
by $d_i(\cdot,\cdot)$)  between discrete harmonic functions in the 
sense of $\underline{\cal{H}}_i$ and in the sense of ${\cal{H}}^\prime_i$; for the 
proof see  \cite{MR2372024}. This 
equivalence allows us to take advantages of all the discrete Sobolev results known for $\underline{\cal{H}}_i$ discrete harmonic extensions.\\

\begin{lemma} \label{lemmaequiv}
Let $u_i  \in {W}_i(\Omega_i^\prime)$ and $\underline{u}_i \in  
\underline{W}_i(\Omega_i^\prime)$ defined by $\underline{u}_i = 
\underline{I}_{{h}}^i u_i$ or by $\underline{u}_i =
\underline{I}_h^{i,j}  u_i$. 
Then 
\begin{equation} \label{equivd} 
\underline{d}_i(\underline{\cal{H}}_i \underline{u}_{i}, \underline{\cal{H}}_i \underline{u}_{i}) \asymp 
d_i({\cal{H}}^\prime_i u_{i}, {\cal{H}}^\prime_i u_{i})
\end{equation}
where $C$ is a positive constant independent of $\delta$, $h_i, H_i$, $\rho_i$ and $u_{i}$. 
\end{lemma}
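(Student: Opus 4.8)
The plan is to establish the equivalence (\ref{equivd}) by relating the two discrete harmonic extensions through the interpolation operators $\underline{I}_h^i$ (or $\underline{I}_h^{i,j}$) and $I_h^i$, using the bounds already collected in Lemma \ref{underlineII} and Lemma \ref{underlineI}. The key structural observation is that both $\underline{\mathcal{H}}_i \underline{u}_i$ and $\mathcal{H}_i^\prime u_i$ are \emph{energy minimizers} subject to matching boundary data on $\Gamma_i^\prime$: by (\ref{minloc}) and the definition of $\underline{\mathcal{H}}_i$, each equals the function of least energy (in $\underline{a}_i$ respectively $a_i^\prime$, or equivalently in the semi-norms $\underline{d}_i$ and $d_i$) among all extensions of the prescribed interface values. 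Crucially, by Lemma \ref{underlineI}, equation (\ref{eq:lem573}), the operators $I_h^i$ and $\underline{I}_h^i$ are mutually inverse on the relevant subspaces, and both operators act on boundary data in a way that preserves it (up to the controlled corner modifications), so they map admissible competitors for one minimization problem to admissible competitors for the other.

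First I would prove the direction $d_i(\mathcal{H}_i^\prime u_i, \mathcal{H}_i^\prime u_i) \preceq \underline{d}_i(\underline{\mathcal{H}}_i \underline{u}_i, \underline{\mathcal{H}}_i \underline{u}_i)$. Starting from the continuous-space harmonic extension $\underline{\mathcal{H}}_i \underline{u}_i \in \underline{W}_i(\Omega_i^\prime)$, I would apply $I_h^i$ to produce a competitor $w_i := I_h^i (\underline{\mathcal{H}}_i \underline{u}_i) \in W_i(\Omega_i^\prime)$. By Lemma \ref{underlineI}, inequalities (\ref{eq:lem571}) and (\ref{eq:lem572}), this competitor satisfies $a_i(w_i, w_i) \preceq \underline{a}_i(\underline{\mathcal{H}}_i \underline{u}_i, \underline{\mathcal{H}}_i \underline{u}_i)$ and $p_{i,\partial}(w_i, w_i) \preceq p_{i,\partial}(\underline{\mathcal{H}}_i \underline{u}_i, \underline{\mathcal{H}}_i \underline{u}_i)$, hence $d_i(w_i, w_i) \preceq \underline{d}_i(\underline{\mathcal{H}}_i \underline{u}_i, \underline{\mathcal{H}}_i \underline{u}_i)$. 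Since $w_i$ shares the correct boundary data on $\Gamma_i^\prime$ (here I must check that $I_h^i$ reproduces the interface values $u_{i,\Gamma^\prime}$ from the type-$C$ nodes, using the consistency built into $\underline{\mathcal{H}}_i$ on $\Gamma_i$), the energy-minimization property (\ref{minloc}) of $\mathcal{H}_i^\prime$ gives $d_i(\mathcal{H}_i^\prime u_i, \mathcal{H}_i^\prime u_i) \preceq d_i(w_i, w_i)$, closing this direction. For the reverse direction I would run the symmetric argument: take $\mathcal{H}_i^\prime u_i$, apply $\underline{I}_h^i$ to get an admissible competitor in $\underline{W}_i(\Omega_i^\prime)$, use Lemma \ref{underlineII} (inequalities (\ref{eq:firstinequality})--(\ref{5.2_1})) to control its $\underline{d}_i$-energy by the $d_i$-energy of $\mathcal{H}_i^\prime u_i$, and invoke the minimization property of $\underline{\mathcal{H}}_i$.

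The main obstacle, and the point requiring the most care, is the \emph{bookkeeping at the subdomain corners} and along the interface edges $E_{ij}$. The two interpolators do not commute with restriction to a single edge: as emphasized in the text following Definition \ref{definiunder3}, the value of $(\underline{u}_i)_i$ on an edge $E_{ij}$ need not depend only on $(u_i)_i$ on that edge, because of how corner values are averaged. This is precisely why the modified operator $\underline{I}_h^{i,j}$ was introduced, and why the statement of Lemma \ref{lemmaequiv} allows either $\underline{I}_h^i$ or $\underline{I}_h^{i,j}$. I would therefore have to verify that the competitors constructed above genuinely match the prescribed interface data on \emph{all} of $\Gamma_i^\prime$ (not just in the interior), handling the endpoint/corner nodes via the limiting definitions $\lim_{x\to V}(u_i)_i(x)$ that $\underline{I}_h^{i,j}$ uses, and confirming that the penalty terms $p_{i,\partial}$ controlling the jumps across $E_{ij}$ transform correctly — this is where (\ref{eq:fourthinequality}) of Lemma \ref{underlineII} becomes essential, since it guarantees the $L^2$-jumps $\|(u_i)_i - (u_i)_j\|_{L^2(E_{ij})}$ are preserved up to constants under the refinement. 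Since the excerpt states the proof is deferred to \cite{MR2372024}, I expect the full argument to amount to checking that these corner and jump subtleties are compatible with the clean minimization/interpolation sandwich sketched above, with the constants remaining independent of $\delta$, $h_i$, $H_i$ and the coefficient $\rho_i$ because every estimate invoked from Lemmas \ref{underlineII} and \ref{underlineI} already carries that independence.
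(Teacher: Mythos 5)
Your proposal is correct and follows essentially the same route as the paper's own proof: a two-sided ``minimization/interpolation sandwich'' in which $I_h^i(\underline{\mathcal{H}}_i\underline{u}_i)$ serves as a competitor for $\mathcal{H}_i^\prime$ (via Lemma \ref{underlineI} and the identity (\ref{eq:lem573})) and $\underline{I}_h^i(\mathcal{H}_i^\prime u_i)$ as a competitor for $\underline{\mathcal{H}}_i$ (via Lemma \ref{underlineII}), with Lemma \ref{lemmac} converting between $a_i^\prime$ and $d_i$ and the penalty term handled through the boundary-trace consistency you flag. The corner/boundary-data verification you single out is exactly the point the paper settles with the observation $I_h^i\,\underline{\mathcal{H}}_i\,\underline{I}_h^i u_i = u_i$ on $\Gamma_i^\prime$, so no substantive difference remains.
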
 
\begin{proof}
First note by construction that $I_h^i \underline{\cal{H}}_i
\underline{I}_h^i u_i =I_h^i \underline{I}_h^i  u_i=u_i$ on 
$\Gamma_i^\prime$. Using Lemma \ref{lemmac}, a 
minimum ${\cal{H}}_i^\prime$-energy argument and  Lemma \ref{lemmac}
again, we obtain 
\[
d_i({\cal{H}}^\prime_i u_{i}, {\cal{H}}^\prime_i u_{i}) \asymp
a_i^\prime({\cal{H}}^\prime_i u_{i}, {\cal{H}}^\prime_i u_{i}) 
\leq 
a_i^\prime(I_h^i \underline{\cal{H}}_i \underline{u}_i,I^i_h
\underline{\cal{H}}_i \underline{u}_i) \asymp 
d_i(I^i_h \underline{\cal{H}}_i \underline{u}_i,I_h^i
\underline{\cal{H}}_i \underline{u}_i).
\]
By Lemma \ref{underlineI}  we have 
\[
d_i(I^i_h \underline{\cal{H}}_i \underline{u}_i,I_h^i
\underline{\cal{H}}_i \underline{u}_i) \preceq 
\underline{d}_i (\underline{\cal{H}}_i \underline{u}_i,
\underline{\cal{H}}_i \underline{u}_i).
\]

The proof of the left  inequality of (\ref{equivd}) is complete. The
proof using the operator $\underline{I}_h^{i,j}$ instead of 
$\underline{I}_h^i$ is similar. \\

 Now let as prove the left inequality of  (\ref{equivd}). Note that  
$\underline{\cal{H}}_i \underline{u}_i = \underline{I}_h^i {\cal{H}}^\prime_i
u_{i} = \underline{u}_i 
$ on $\Gamma_i^\prime$.  Using a minimal ${\cal{H}}^\prime_i$-energy argument 
 and Lemma \ref{underlineII} 
we obtain 
\[
\underline{d}_i(\underline{\cal{H}}_i \underline{u}_i, 
\underline{\cal{H}}_i \underline{u}_i)\asymp 
\underline{a}_i (\underline{\cal{H}}_i,
\underline{\cal{H}}_i \underline{u}_i)  \leq 
\underline{a}_i (\underline{I}_h^i {\cal{H}}^\prime_i u_{i}, \underline{I}_h^i
{\cal{H}}^\prime_i u_{i}) \asymp 
{a}_i ({\cal{H}}^\prime_i u_{i}, {\cal{H}}^\prime_i u_{i}),
\]
and again Lemma \ref{underlineII} we have 
\[ 
p_{i,\partial}(\underline{\cal{H}}_i \underline{u}_i,
\underline{\cal{H}}_i \underline{u}_i) =  
p_{i,\partial}(\underline{I}_h^i
{\cal{H}}^\prime_i u_{i}, \underline{I}_h^i
{\cal{H}}^\prime_i u_{i}) \preceq 
d_i({\cal{H}}^\prime_i u_{i}, {\cal{H}}^\prime_i u_{i}),
\]
therefore, the left  inequality of (\ref{equivd}) follows. The proof
with $\underline{I}_h^{i,j}$ is similar. 
\end{proof}

We are now in position to prove Lemma \ref{lemma45}.

{\bf Proof of Lemma \ref{lemma45}.}\begin{proof}
 We first consider the case when the 
edges $E_{ij}$ are made by a single interval only. 
Let $u_\triangle \in {W}_\triangle(\Gamma^\prime)$ and let 
$u = (u_\Pi,u_\triangle) \in \tilde{W}(\Gamma^\prime)$ be the solution of 
\begin{equation} \label{equivS}
\langle \tilde{S}u_\triangle,u_\triangle \rangle \;\;=\;\; 
\min \langle {S}^\prime w,w \rangle =: \langle {S}^\prime u,u \rangle,
\end{equation} 
where the minimum is taken over $w = (w_\Pi, w_\triangle) \in 
\tilde{W}(\Gamma^\prime)$ such that   $w_\Pi \in \hat{W}_\Pi(\Gamma^\prime)$
and $w_\triangle = u_\triangle$. Here $S^\prime=    \mbox{diag}\{S_i^\prime\}_{i=1}^N$ where 
$S_i^\prime$ is defined in (\ref{Si}). The problem has a unique solution, see (\ref{430}).
Hence, we can 
replace $\|u_\triangle\|_{\tilde{S}}$          in (\ref{pdeltain})
   by $\|u          \|_{S^\prime}$. \\

Let us represent the $u$ defined above 
as $\{u_i\}_{i=1}^N \in  W(\Gamma^\prime)$
where $u_i\in W_i({\Gamma}_i^\prime)$. Let $I_{E_{ij}} (u_i)_i$ be 
the linear function on $\bar{E}_{ij}$ defined  by 
the values of $(u_i)_i$ at $x \in \partial {E}_{ij}$, and let 
$I_{E_{ji}} (u_i)_j$ be the linear function on $\bar{E}_{ji}$ 
defined by the 
values of $(u_i)_j$ at $x \in \partial {E}_{ji}$.  
Let $\hat{u} := \{\hat{u}_i\}^N_{i=1}$ where 
$\hat{u}_i \in W_i({\Gamma}_i^\prime)$ is  defined by 
$$
(\hat{u}_i)_i = I_{E_{ij}} (u_i)_i \;\; \mbox{on} \;\; \bar{E}_{ijh} \;\; \mbox{for all}\;\;
j \in \,{\mathcal{J}}_H^{i,0}
$$
and
$$
(\hat{u}_i)_j = I_{E_{ji}} (u_i)_j \;\; \mbox{on} \;\; \bar{E}_{jih} \;\;
\mbox{for all}\;\; j \in \,{\mathcal{J}}_H^{i,0}.
$$
Note that $\hat{u} \in \hat{W}(\Gamma^\prime)$, therefore, let us 
represent $\hat{u} = (\hat{u}_\Pi,\hat{u}_\triangle)$ where 
$B_\triangle \hat{u}_\triangle = 0$. Using this we have, see (\ref{Ptri}),
$$
P_\triangle u_\triangle \equiv B^T_{D, \triangle} B_\triangle u_\triangle = B^T_{D, \triangle} B_\triangle(u_\triangle - \hat{u}_\triangle) = P_\triangle
(u_\triangle - \hat{u}_\triangle). 
$$

Note that $u - \hat{u} =0 $ at the $\Pi$-nodes, hence,  
let us define $v \in {W}(\Gamma^\prime)$ to be equal to 
$P_\triangle (u_\triangle - \hat{u}_\triangle)$ at the 
$\triangle$-nodes and equal to zero  at the $\Pi$-nodes. Let us 
represent  $v = \{v_i\}_{i=1}^N$ where $v_i \in W_i({\Gamma}_i^\prime)$. We 
have 
\begin{equation} \label{Pdeltaw}
\parallel P_\triangle u_\triangle \parallel^2_{{S}^\prime_\Delta} =
\parallel v \parallel^2_{{S}^\prime} = \sum^N_{i=1} \parallel 
v_i\parallel^2_{{S}^\prime_i}
\end{equation} \\
in view of the definition of $S_{i,\triangle}^\prime$, $S^\prime_\Delta$ and $S^\prime$,
see (\ref{Stri}), (\ref{Si}) and (\ref{tildeS}), hence, to prove the lemma
it remains to show that 
\begin{equation} \label{referS} 
\sum^N_{i=1} \parallel v_i \parallel^2_{S_i^\prime} \leq 
C (1 + \log{H/h})^2 \|u\|^2_{S^\prime}
\end{equation}
since by (\ref{equivS}) we obtain (\ref{pdeltain}). By Lemma \ref{lemmac}  we need to show 
\begin{equation} \label{viSi}
\sum^N_{i=1} 
{d}_i({\mathcal{H}}^\prime v_i, {\mathcal{H}}^\prime v_i) \leq C (1 + \log{H/h})^2 \sum^N_{i=1} 
{d}_i({\mathcal{H}}^\prime u_i, {\mathcal{H}}^\prime u_i).
\end{equation} 
Define $\underline{v}_i = \underline{I}_h^i v_i$.  Note that because
$v_i$ vanishes at the $\Pi$- nodes, $\underline{v}_i$ also vanishes 
at the $\Pi$-nodes. Using Lemma \ref{lemmaequiv} we obtain
\[ 
{d}_i({\mathcal{H}}^\prime v_i, {\mathcal{H}}^\prime v_i) \asymp
\underline{d}_i(\underline{\cal{H}}_i \underline{v}_i,
\underline{\cal{H}}_i \underline{v}_i).
\]
From now on, let us denote $\underline{v}_i = \underline{\cal{H}}_i
\underline{v}_i$  and  $\underline{v}_j =
\underline{\cal{H}}_j \underline{v}_j$
and so, 
\begin{equation} \label{hatdivi}
\underline{d}_i(\underline{v}_i, \underline{v}_i) = \rho_i \parallel \nabla (\underline{v}_i)_i \parallel^2_{L^2(\Omega_i)} + \sum_{j \in \,{\mathcal{J}}_H^i} 
\frac{\rho_i \delta}{l_{ij} h_{e}} \parallel (\underline{v}_i)_i- (\underline{v}_i)_j \parallel^2_{L^2(E_{ij})}.
\end{equation}

  We first estimate the first term of (\ref{hatdivi}). We have
\begin{equation} \label{Hhalf} 
\parallel \nabla (\underline{v}_i)_i \parallel^2_{L^2(\Omega_i)} 
\leq C \sum_{ j \in  \,{\mathcal{J}}_H^{i,0}} \parallel 
(\underline{v}_i)_i \parallel^2_{H^{1/2}_{00}(E_{ij})}
\end{equation}
by the well-known estimate, see \cite{MR2104179}, and the fact that 
$(\underline{v}_i)_i= 0$ at corners of $\Gamma_i$. Note that 
(\ref{Hhalf}) is also valid for subdomains $\Omega_i$ 
which 
intersect  $\partial \Omega$ by edges since we use the obvious inequality 
\[
\parallel \nabla (\underline{v}_i)_i \parallel^2_{L^2(\Omega_i)} \leq 
\parallel \nabla (\tilde{\underline{v}}_i)_i \parallel^2_{L^2(\Omega_i)}
\leq  C \sum_{ j \in 
 \,{\mathcal{J}}_H^{i,0}} \parallel (\underline{v}_i)_i \parallel^2_{H^{1/2}_{00}(E_{ij})}
\]
where $(\tilde{\underline{v}}_i)_i$ is the $\underline{\cal{H}}_i$ discrete harmonic extension
on $\Omega_i$ with $(\tilde{\underline{v}}_i)_i = (\underline{v}_i)_i$ on edges $\bar{E}_{ij}$
 for $ j \in {\mathcal{J}}_{H}^{i,0}$, 
and $(\tilde{v}_i)_i = 0$ on edges $\bar{E}_{ij}$ for $j \in
{\mathcal{J}}_{H}^{i,\partial}$. For the case 
$E_{ij}$ such that $j \in  {\mathcal{J}}_{H}^{i,0}$, define
$\underline{u}_i = \underline{I}_h^{i,j} u_i$ and $\underline{u}_j =
\underline{I}_h^{j,i} u_j$. Note also  that $\hat{\underline{u}}_i =
I_h^{i,j} \hat{u}_i = \hat{u}_i $ on  $\bar{E}_{ij}$ and $\bar{E}_{ji}$, and also
$\hat{\underline{u}}_j = I_h^{j,i} \hat{u}_j =  \hat{u}_j$. 
We use (\ref{Pdelta1}) to get
\begin{eqnarray} \label{virhoi}
&&\rho_i \parallel (\underline{v}_i)_i \parallel^2_{H^{1/2}_{00}(E_{ij})} = 
\frac{\rho_i \rho_j^{2\beta}}{(\rho^{\beta}_i + \rho^{\beta}_j)^2} \parallel (\underline{u}_i -
 \hat{\underline{u}}_i)_i - (\underline{u}_j - \hat{\underline{u}}_j)_i \parallel^2_{H^{1/2}_{00} (E_{ij})} \leq
 \nonumber \\
&\leq& 3\;\{
  \rho_i \parallel(\underline{u}_i - \hat{\underline{u}}_i)_i \parallel^2_{H^{1/2}_{00}(E_{ij})} +  \rho_j
 \parallel(\underline{u}_j - \hat{\underline{u}}_j)_j \parallel^2_{H^{1/2}_{00}(E_{ji})} + \\
&+& 
\frac{\rho_i \rho_j^{2\beta}}{(\rho^{\beta}_i + \rho^{\beta}_j)^2} 
\parallel(\underline{u}_j - \hat{\underline{u}}_j)_i - (\underline{u}_j-\hat{\underline{u}}_j)_j
 \parallel^2_{H^{1/2}_{00}(E_{ij})}
\;\}, \nonumber
\end{eqnarray}
where we have used that 
$\frac{\rho_i \rho_j^{2\beta}}{(\rho^{\beta}_i + \rho^{\beta}_j)^2} \leq 
\min\{\rho_i,\rho_j\}$ if $\beta \in [1/2,\infty)$, see \cite{MR1469678}. 

 Following the same steps of the proof of Lemma 4.5 in  \cite{MR3033016}  (see 
there (4.49)-(4.51)), we can bound 
\begin{equation} \label{virhoi2}
\rho_i \parallel (\underline{v}_i)_i
\parallel^2_{H^{1/2}_{00}(E_{ij})} 
\leq   C(1 + \log \frac{H}{h})^2 \{
\underline{d}_i(\underline{\cal{H}}_i \underline{u}_i,\underline{\cal{H}}_i \underline{u}_i) + 
\underline{d}_j(\underline{\cal{H}}_j \underline{u}_j,\underline{\cal{H}}_j \underline{u}_j)\}
\end{equation}
and using Lemma \ref{lemmaequiv} we obtain 
\begin{equation} \label{virhoi3}
\rho_i \parallel (\underline{v}_i)_i
\parallel^2_{H^{1/2}_{00}(E_{ij})} 
\leq   C(1 + \log \frac{H}{h})^2 \{
{d}_i({\cal{H}}^\prime_i {u}_i, {\cal{H}}^\prime_i {u}_i) + 
{d}_j({\cal{H}}^\prime_j {u}_j, {\cal{H}}^\prime_j {u}_j)\}. 
 \end{equation}

 It remains to estimate the second term of the right-hand side of
 (\ref{hatdivi}). The case $E_{ij}$ where $j  \in
 {\mathcal{J}}_{H}^{i,\partial}$ is trivial. For the 
case $E_{ij}$ such that $ j \in {\mathcal{J}}_{H}^{i,0}$ 
using  (\ref{Pdelta1}) - (\ref{Pdelta2}), and  similar
arguments as in the proof of Lemma 4.5 in \cite{MR3033016} (see there (4.45)-(4.51)), we obtain 
\begin{eqnarray}
& & 
\frac{\delta\rho_i }{l_{ij} h_e} \parallel (\underline{v}_i)_i - (\underline{v}_i)_j \parallel^2_{L^2(E_{ij})} =
 \frac{\rho_i \rho_j^{2\gamma}}{(\rho^{\gamma}_i + \rho^{\gamma}_j)^2}\times \nonumber \\
&\leq &  C(1 + \log \frac{H}{h})^2 \{
\underline{d}_i(\underline{\cal{H}}_i \underline{u}_i,\underline{\cal{H}}_i \underline{u}_i) + 
\underline{d}_j(\underline{\cal{H}}_j \underline{u}_j,\underline{\cal{H}}_j \underline{u}_j)\}.
\end{eqnarray}
Using Lemma \ref{lemmaequiv},  we obtain 
\begin{eqnarray}\nonumber
\frac{\delta\rho_i }{l_{ij} h_e} \parallel (\underline{v}_i)_i - (\underline{v}_i)_j \parallel^2_{L^2(E_{ij})}  &\leq&
 C(1 + \log \frac{H}{h})^2 \times \\
&&\{
{d}_i({\cal{H}}^\prime_i {u}_i, {\cal{H}}^\prime_i {u}_i) + 
{d}_j({\cal{H}}^\prime_j {u}_j, {\cal{H}}^\prime_j {u}_j)\}. 
 \label{virhoi34}
\end{eqnarray}
Using the inequalities (\ref{virhoi3}) and (\ref{virhoi34}) in 
(\ref{hatdivi}), summing the resulting inequality for $i$ from 1 to $N$ and  noting that
the number of edges of each subdomain can be bounded independently of $N$, 
we obtain  (\ref{viSi}) and (\ref{referS}).

  The proof also works with minor modifications
for the case when $E_{ij}$ is a continuous curve
of intervals. For that, we should consider 
discrete Sobolev tools for non straight edges, 
see for instance \cite{MR2421044}, and interpret  
$I_{E_{ij}} (u_i)_i$ and $I_{E_{ji}}(u_i)_j$ 
as the linear function with respect to parametrized path
on the edge defined by the nodal value of  $(u_i)_i$ or $(u_i)_j$ 
at $x \in \partial E_{ij}$ and $\partial E_{ji}$. 
\end{proof}

\section{Numerical experiments} \label{sec:num}
In this section, we present numerical 
results for solving the linear system 
(\ref{ProjF0}) with the left preconditioner (\ref{Minv}). 
 We  show that the lower and upper bounds of Theorem 
\ref{teorema32} are reflected in  the numerical tests. In particular
we show that the constant $C$ in (\ref{precM}) does not depend on 
$h_i$, $H_i$, and the jumps of $\rho_i$.

 We consider the domain $\Omega = (0,1)^2$ and divide 
 into $N = M \times M$  squares subdomains $\Omega_i$
of size $H = 1/M$. 
Inside each subdomain
$\Omega_i$  we generate a structured triangulation with
$n=m\times m$ subintervals in each coordinate direction 
and apply the discretization presented in Section \ref{discretization} 
with  penalty term $\delta = 10$. In the numerical 
experiments we use a  red  and black checkerboard type of
subdomain partition, where the most bottom-left subdomain has
a black color. We solve the second order
elliptic problem  $ - \mbox{div} (\rho(x)\nabla u^{*}_{ex}(x)) = 1$
in $\Omega$ with homogeneous Dirichlet boundary conditions $u^* = 0$. 
In the numerical experiments, we run PCG until the
$l_2$ initial residual is reduced by a factor of  $10^{10}$. 

\begin{table}[htb] 
\centering 
\small 
\caption{ Number of iterations, condition numbers 
 (in parenthesis) for different sizes of coarse and local problems 
and 
with constant coefficient $\rho=1$. Here $\beta = 1$, see (\ref{diagscal}).}
\begin{tabular}{|l|c|c|c|c|}\hline
 {$M$ $\downarrow$ $m \to$}   & $  4$ & $  8$ & $  16$\\ \hline
 $4 $ &13 (2.28)& 13 (2.84)& 13 (3.61) \\
 $8 $ & 15 (2.50)& 17 (3.16)& 18 (4.01)\\
 $16 $  &15 (2.59)& 17 (3.28)& 20 (4.16)\\ \hline
\end{tabular}
\label{tab:FETIDPTest1}
\end{table}

In the first test of experiments we consider the constant coefficient case $\rho=1$. We consider 
different values of $N=M\times M$ coarse partitions and different values of local 
refinements $n=m\times m$. Table \ref{tab:FETIDPTest1} lists the
number of PCG iterations and in parenthesis the condition number
estimate of the preconditioned system. 
As expected from the analysis, 
 the condition numbers appear to be independent of the number of subdomains
and grow by a two-logarithmically factor when the size of the local problems 
increases. As expected from the theory, the lower bounds estimates are always very 
closed to one, therefore, we do not show in the tables. 
\begin{table}[htb] 
\centering 
\small 
\caption{
 Number of iterations, condition numbers 
 (in parenthesis) for different sizes of coarse and local problems 
and 
with constant coefficient $\rho=1$ in the black substructures and $\rho=1000$ 
in the red substructures. Here $\beta = 1$, see (\ref{diagscal}).}
\begin{tabular}{|l|c|c|c|c|}\hline
 {$M$ $\downarrow$ $m \to$}   & $  4$ & $  8$ & $  16$\\ \hline
 $4 $ & 5 (1.10)& 5 (1.10)& 5 (1.10)\\
 $8 $ & 6 (1.10)& 6 (1.12)& 6 (1.16)\\
 $16 $ & 7 (1.29)& 8 (1.42)& 8 (1.55)\\ \hline
\end{tabular}
\label{tab:FETIDPTest1mu1000}
\end{table}
\begin{table}[h] 
\centering 
\small 
\caption{Number of  iterations  
and condition numbers (in parenthesis) for different values
of the coefficient $\rho_r$ on
the red substructures and local meshes with $n=m\times m$. On black substructures the 
coefficient $\rho_b = 1$  is kept fixed. 
The substructure partition is also kept fixed to $N=M\times M=8 \times 8$.
 Here $\beta = 1$, see (\ref{diagscal}). }
\label{tab:FETIDPTest2}
\begin{tabular}{|c|c|c|c|c|}\hline
 $\rho_r \downarrow$ $m \to$ & $2$ & $ 4$ & $8$ & $16$ \\ \hline
 $10000 $ & 5 (1.10)& 5 (1.09)& 5 (1.09)& 5 (1.09)\\
 $1000 $ & 6 (1.10)& 6 (1.10)& 6 (1.12)& 6 (1.16)\\
 $100 $ & 7 (1.21)& 7 (1.35)& 8 (1.50)& 9 (1.66)\\
 $10 $ & 10 (1.50)& 11 (1.79)& 13 (2.15)& 15 (2.55)\\
 $1 $ & 12 (1.96)& 15 (2.50)& 17 (3.16)& 18 (4.01)\\
 $0.1 $ & 10 (1.51)& 12 (1.82)& 13 (2.18)& 15 (2.59)\\
 $0.01 $ & 7 (1.27)& 8 (1.44)& 9 (1.62)& 10 (1.80)\\
 $0.001 $ & 6 (1.10)& 6 (1.14)& 6 (1.21)& 6 (1.28)\\
 $0.0001 $ & 5 (1.10)& 5 (1.09)& 5 (1.09)& 5 (1.09)\\
 \hline\hline 
\end{tabular} 
\end{table}

 We now consider the discontinuous coefficients case where we set 
 $\rho_b = 1$ on the black  substructures and we vary $\rho_r$ on the 
red  substructures.
We first consider
different values of $N=M\times M$ coarse partitions and different values of local 
refinements $n=m\times m$ while we keep $\rho_r=1000$. 
The results are shown in Table \ref{tab:FETIDPTest1mu1000} and are similar to the 
previous test for continuous coefficient.

For the next experiment
 the substructures partition is kept fixed to
$8 \times 8$. Table \ref{tab:FETIDPTest2} lists the results on runs for different
values of $\rho_r$ and for different levels  of refinements.
In Table \ref{tab:FETIDPTest3} and Table  \ref{tab:FETIDPTest4} we 
repeat the test with two diferent values of $\beta$, see (\ref{diagscal}).
 The performance of the preconditioner is  
robust with respect to the 
coefficients and $h$ as predicted.
\begin{table}[htb] 
\centering 
\small 
\caption{Number of  iterations  
and condition numbers (in parenthesis) for different values
of the coefficient $\rho_r$ and local meshes with $n=m\times m$ on
the red substructures. On black substructures the 
coefficient $\rho_b = 1$  is kept fixed. 
The substructure partition is also kept fixed to $N=M\times M=8\times 8$. 
 Here $\beta = 0.5$, see (\ref{diagscal}).}
\label{tab:FETIDPTest3}
\begin{tabular}{|c|c|c|c|c|}\hline
 $\rho_r \downarrow$ $m \to$ & $2$ & $ 4$ & $8$ & $16$ \\ \hline
 $1000 $ & 20 (4.21)& 24 (5.37)& 25 (6.58)& 28 (9.68)\\
 $1 $ & 12 (1.96)& 15 (2.50)& 17 (3.16)& 18 (4.01)\\
 $0.001 $ & 20 (4.21)& 24 (5.39)& 25 (6.55)& 27 (9.54)\\
 \hline\hline 
\end{tabular} 
\end{table}

\begin{table}[htb] 
\centering 
\small 
\caption{Number of  iterations  
and condition numbers (in parenthesis) for different values
of the coefficient $\rho_r$ and local meshes with $n=m\times m$ on
the red substructures. On black substructures the 
coefficient $\rho_b = 1$  is kept fixed. 
The substructure partition is also kept fixed to $N=M\times M=8\times 8$. 
 Here $\beta = 10$, see (\ref{diagscal}).}
\label{tab:FETIDPTest4}
\begin{tabular}{|c|c|c|c|c|}\hline
 $\rho_r \downarrow$ $m \to$ & $2$ & $ 4$ & $8$ & $16$ \\ \hline
 $1000 $ & 6 (1.10)& 6 (1.10)& 6 (1.12)& 6 (1.16)\\
 $1 $ & 12 (1.96)& 15 (2.50)& 17 (3.16)& 18 (4.01)\\
 $0.001 $ & 6 (1.10)& 6 (1.14)& 6 (1.21)& 6 (1.29)\\ \hline\hline 
\end{tabular} 
\end{table}

\appendix 

\section{Proof of Lemma \ref{underlineII}}
We first prove the right hand side of the first inequality,  the inequality  (\ref{eq:firstinequality}). That is, we prove that there exist 
a constant $C$ such that, for all $u_i\in W_i(\Omega_i^\prime)$ we have
\begin{equation}\label{A.1}
\underline{a}_i((\underline{u}_i)_i,(\underline{u}_i)_i)\leq Ca_i(u_i,u_i).
\end{equation}
First, note that,
\begin{equation}
\underline{a}_i((\underline{u}_i)_i,(\underline{u}_i)_i)=
\sum_{\tau\in \mathcal{T}^i_h}\rho_i \int_{\tau} | \nabla(\underline{u}_i)_i|^2dx.
\end{equation}
We consider the cases of refined mesh $\underline{\mathcal{T}}^i_h$ listed 
in Definition \ref{definitri}  and illustrated in Figure \ref{fig:ref}. 

{\bf First case (Figure \ref{fig:ref}, upper-left picture)}.
For the first case, that is, $\tau\in \mathcal{T}^i_h$ and let us denote this 
triangle by $\tau_\ell$ and its neighbor by $\tau_p$, see Figure \ref{fig:ref2}.
\begin{figure}[h]
\centering
{\psfig{figure=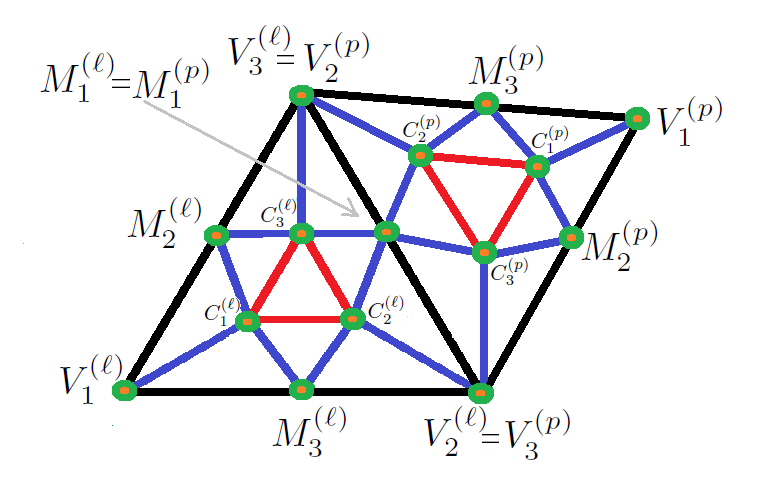,height=6cm,width=10cm,angle=0}}
\caption{Illustration of refinement of two neighboring elements.}
\label{fig:ref2}
\end{figure}

Let us denote by $\underline{\tau}$ a generic triangle of $\underline{\mathcal{T}}_h^i$. We have
\[
\int_{\tau_\ell}| \nabla(\underline{u}_i)_i|^2dx=\sum_{\underline{\tau}\subset \tau_\ell}
\int_{\underline{\tau}} | \nabla(\underline{u}_i)_i|^2dx.
\]

The sum runs over ten triangles listed in the first case of Definition \ref{definitri}. 
Let $(u_i)_i$ and $(\underline{u}_i)_i$ on $\tau$ be denoted 
by $u_i^{(\ell)}$ or $u^{(\ell)}$ and $\underline{u}_i^{(\ell)}$
or $\underline{u}^{(\ell)}$, respectively.
Note that in the triangle $\Delta^{(\ell)}:=C_1^{(\ell)}C_2^{(\ell)}C_3^{(\ell)}\subset \tau_\ell$ we have
$(u_i)_i|_{\tau_\ell}=(u_i^{\ell})=u^{(\ell)}$ and then
\[
\int_{\Delta^{(\ell)}}| \nabla \underline{u}_i^{(\ell)}|^2 dx = 
\int_{\Delta^{(\ell)}}| \nabla u_i^{(\ell)}|^2 dx=
\int_{\Delta^{(\ell)}}| \nabla u^{(\ell)}|^2 dx.
\]

Let us consider now the triangle $\Delta^{(\ell)}:=C_2^{(\ell)}C_3^{(\ell)}M_1^{(\ell)}$
where $M_1^{(\ell)}=M_1^{(p)}$. We have (see Figure \ref{fig:ref2})
\begin{eqnarray*}
I:&=&\int_{\Delta^{(\ell)}}| (\nabla ({u}_i^{(\ell)}))_i|^2 dx\leq 
C\Big\{ \Big[u^{(\ell)}( C_3^{(\ell)})-u^{(\ell)}(C_2^{(\ell)})\Big]^2+\\
&&\Big[u^{(\ell)}( C_3^{(\ell)})- 0.5\big( u^{(\ell)}(M_1^{(\ell)})+u^{(p)}(M_1^{(p)})\big)\Big]^2+\\
&&\Big[u^{(\ell)}( C_2^{(\ell)})- 0.5\big( u^{(\ell)}(M_1^{(\ell)})+u^{(p)}(M_1^{(p)})\big)\Big]^2\\
&&:= I_1+I_2+I_3.
\end{eqnarray*}
The first term above, $I_1$, it is estimated by $\|\nabla u^{(\ell)}\|_{L^2(\Delta^{(\ell)})}^2$. 
The second term, $I_2$, it is estimated as follows. We have
\begin{eqnarray}\nonumber
I_2&\leq &  C\Big\{ \big[ u^{(\ell)}(C_3^{(\ell)})-u^{(\ell)}(M_1^{(\ell)})\big]^2+
\frac{1}{2} \big[ u^{(\ell)}(M_1^{(\ell)})-u^{(p)}(M_1^{(p)})\big]^2\\
&\leq & C\Big\{ \|\nabla u^{(\ell)}\|_{L^2(\Delta^{(\ell)})}^2+
\frac{1}{h}\|u^{(\ell)}-u^{(p)}\|^2_{L^2(V_2^{(\ell)}V_3^{(\ell)})}\Big\} ,\label{A3}
\end{eqnarray}
where here $V_2^{(\ell)}V_3^{(\ell)}$ denotes the edge of $\tau$ with the end points $V_2^{(\ell)}$ and 
$V_3^{(\ell)}$. In the same way we can estimate the third term, $I_3$. Thus, 
\begin{equation}\label{Ibound}
I\leq C\left\{ \| \nabla u^{(\ell)}\|^2_{L^2(\tau_\ell)}+
\frac{1}{h}\|u^{(\ell)}-u^{(p)}\|^2_{L^2(V_1^{(\ell)}V_3^{(\ell)})}\right\}.
\end{equation}
Similarly it is possible to estimate the terms involving the triangles 
$C_1^{(\ell)}C_2^{(\ell)}M_3^{(\ell)}$ and $C_1^{(\ell)}C_3^{(\ell)}M_2^{(\ell)}$.

We now estimate the term on $\Delta^{\ell}:=C_2^{(\ell)}M_3^{(\ell)}V_2^{(\ell)}$.
We have then
\begin{eqnarray}
I:&=&\int_{\Delta^{(\ell)}}| \nabla \underline{u}_i^{(\ell)}|^2 dx\leq 
C\Big\{ \Big[u^{(\ell)}( C_2^{(\ell)})-\frac{1}{2}
\big( u^{(\ell)}(M_3^{(\ell)})+u^{(k)}(M_2^{(k)})\big)\Big]^2+\nonumber\\
&&\Big[u^{(\ell)}( C_2^{(\ell)})- \frac{1}{n_{\ell pk}}\big( u^{(\ell)}(V_2^{(\ell)})+
 u^{(p)}(V_3^{(\ell)})+\cdots +u^{(k)}(V_2^{(k)})\big)\Big]^2+\nonumber\\
&&\Big[\frac{1}{2}\big(u^{(\ell)}( M_3^{(\ell)})+u^{(k)}(M_2^{(k)})\big)-\nonumber\\
&& \frac{1}{n_{\ell pk}}
\big( u^{(\ell)}(V_2^{(\ell)})+u^{(p)}(V_3^{(\ell)})+\cdots+u^{(k)}(V_2^{(k)}))\big)\Big]^2\nonumber\\
&&:= I_1+I_2+I_3.\label{A5}
\end{eqnarray}
Here $\tau_k$ has a common edge $V_1^{(\ell)}V_2^{(\ell)}$ with $\tau_\ell$ and 
$n_{\ell p k}$ is the number of triangles of $\mathcal{T}_h^i$ with common vertex $V_2^{(\ell)}$.
The first term, $I_1$, and then second term, $I_2$ are estimated as in (\ref{A3}). 
The third term, $I_3$, it is estimated in a similar way by adding and substracting
the quantity $\big(u^{(\ell)}(V_2^{(\ell)})+u^{(k)}(V_2^{(k)})\big) $, see Figure \ref{fig:ref2}.
We proceed as above and using these estimates in (\ref{A5}) we obtain
\begin{eqnarray}
I&\leq& C\Big\{ \|\nabla u^{(\ell)}\|_{L^2(\tau_\ell)}^2+
 \|\nabla u^{(k)}\|_{L^2(\tau_k)}^2+\nonumber\\
&&\frac{1}{h}
\Big\{ \| u^{(\ell)}-u^{(k)}\|^2_{L^2(\partial\tau_\ell\cap \partial\tau_k)} +\cdots +\|u^{(\ell)}
-u^{(p)}\|_{L^2(\partial\tau_\ell\cap\partial\tau_p)}^2 \Big\} \Big\}.\label{A6}
\end{eqnarray}
In a similar way are estimated the terms over the remaining triangles of $\tau_\ell$.\\

Using the above estimates we show that 
\begin{equation}
\int_{\tau_\ell} |\nabla ( \underline{u}_i)_i|^2\leq C\Big\{ 
\sum_{\tau}\|\nabla ( u_i)_i\|_{L^2(\tau)}^2 +\frac{1}{h}
\sum_{e}\|(u_i)_i^+-(u_i)_i^-\|_{L^2( e)}^2
\Big\}\end{equation}
where the first sum runs over the elements $\tau$ which intersect $\tau_\ell$ by 
an edge and the second sum runs over edges e of $\tau$ which have a common vertex 
or edge with $\tau_\ell$.

{\bf Second case (Figure \ref{fig:ref}, upper-right picture).} We now consider the case when a vertex of $\Omega_i$ is common for two and more triangles of 
$\mathcal{T}_i^h$. Let us consider the case of two triangles, see Figure \ref{fig:A2}.  This case is estimated similar as the first case. 
\begin{figure}[htb]
\centering
{\psfig{figure=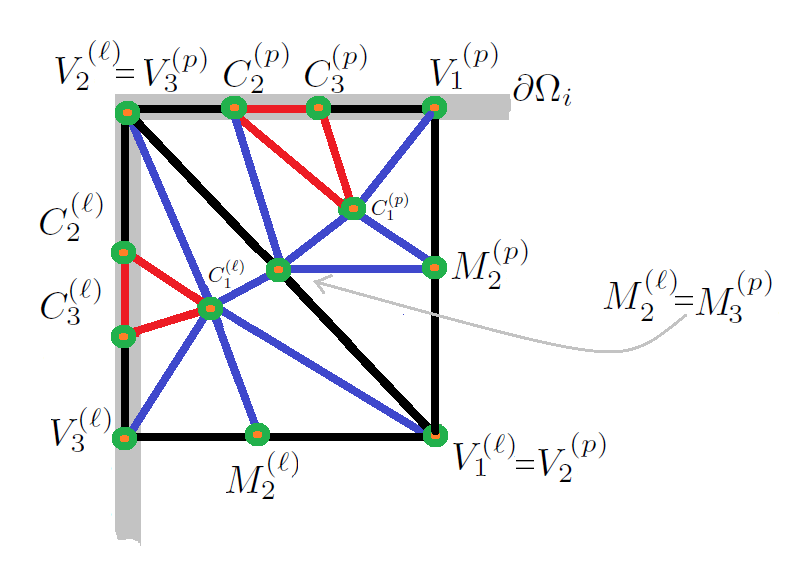,height=6cm,width=10cm,angle=0}}
\caption{Illustration of refinement of two neighboring elements.}
\label{fig:A2}
\end{figure}

{\bf Third case (Figure \ref{fig:ref}, lower-left picture).}
This case (see 
Figure \ref{fig:A3}) is also estimated similar as the first case. 

\begin{figure}[htb]
\centering
{\psfig{figure=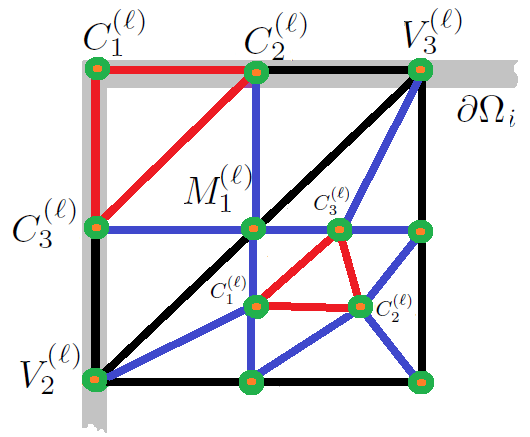,height=6cm,width=7cm,angle=0}}
\caption{Illustration of refinement of two neighboring elements.}
\label{fig:A3}
\end{figure}

Adding the above estimates for the three cases we get the estimate (\ref{A.1}) for the case 
$\underline{u}_i=\underline{I}^i_hu_i$. For the case  $\underline{u}_i=\underline{I}^{i,j}_hu_i$
we need only some minor modifications of the proof of the second case above, see Figure \ref{fig:A2}. 
This finishes the proof of the left hand side inequality of  (\ref{eq:firstinequality}).\\

We now present the proof of the left hand side of 
the result stated in Lemma \ref{underlineII}, Equation  (\ref{eq:firstinequality}). We need to show that 
there exists a constant $C$ such that
\begin{equation}\label{A.8}
a_i(({u}_i)_i,(u_i)_i)\leq C\underline{a}_i ( (\underline{u}_i)_i,(\underline{u}_i)_i).
\end{equation}
Note that, on $ \tau_\ell \in \mathcal{T}^h_i$ with 
vertices of type $C$, we have (see Figure \ref{fig:ref2})
\begin{eqnarray}
\| \nabla (u_i)_i\|^2_{L^2(\tau_\ell)}&\leq& 
C\Big\{ \big[ u^{(\ell)}(C_1^{(\ell)})-u^{(\ell)}(C_2^{(\ell)})\big]^2+\nonumber \\
&&  \big[ u^{(\ell)}(C_1^{(\ell)})-u^{(\ell)}(C_3^{(\ell)})\big]^2+
 \big[ u^{(\ell)}(C_2^{(\ell)})-u^{(\ell)}(C_3^{(\ell)})\big]^2\Big\}\nonumber\\
&\leq & C \|\nabla (I_h^{(i)} u^{(\ell)})\|_{L^2(\tau_\ell)}^2\leq 
C\| \nabla( \underline{u}_i)_i\|_{L^2(\tau_\ell)}^2.\label{A.9}
\end{eqnarray}

This is valid for all three cases considered above. Using the estimate (\ref{A.9}) we 
prove (\ref{A.8}). The proof of the equivalence   (\ref{eq:firstinequality}) is now complete.\\

We now prove the second inequality of Lemma \ref{underlineII}, 
the inequality (\ref{eq:secondinequality}). We have 
\begin{equation}\label{A10}
p_{i,\partial}(\underline{u}_i,\underline{u}_i)=\sum_{j\in \mathcal{T}^i_H} 
\sum_{e\in \mathcal{E}^{i,j}_h }\int_e \frac{\delta}{\ell_{ij}} \frac{\rho_i}{h_e}
(\underline{u}_i-\underline{u}_j)^2 dS.
\end{equation}
On the edge $e$ we have (see Figure \ref{fig:A4}))
\begin{equation}\label{A10_1}
\int_e (\underline{u}_i-\underline{u}_j)dS=\sum_{\underline{e}\subset e}
\int_{\underline{e}} (\underline{u}_i-\underline{u}_j)^2dS
\end{equation}
where $\underline{e}$ runs over the edges of $\underline{\mathcal{T}}_i^h$,   $\underline{e}\subset E_{ij}$. Note that,
on $\underline{e}=[C_2^+,C_3^+]=[C_2^-,C_3^-]$, we can write
\[
\int_{\underline{e}}(\underline{u}_i-\underline{u}_j)^2dS=
\int_{\underline{e}}(u_i-u_j)^2 dS.
\]
Additionally, on  $\underline{e}=[V_1^+,C_2^+]=[V_1^-,C_2^-]$, we have
\begin{eqnarray*}
\int_{\underline{e}}(\underline{u}_i-\underline{u}_j)^2dS&\leq& C\Big\{ 
\big[ u_i(C_2^+)-u_j(C_2^-)\big]^2+\\
&&\big[  \frac{1}{2}( u_i(C_2^+)-u_j(\widetilde{C}_2^+) ) 
-\frac{1}{2}(u_j(C_2^-)-u_j(\widetilde{C}_2^-))\big]^2
\Big\}
\end{eqnarray*}
where $\widetilde{C}_2^+$ and $\widetilde{C}_2^-$ are the nodal points on edges 
$\widetilde{e}$
of the triangles of $\mathcal{T}_h^i$ and $\mathcal{T}_h^j$ on $E_{ij}$ and $E_{ji}$ 
with common nodal points $V_1^+$ and $V_1^-$, respectively. Thus, in this case, 
\[
\int_{\underline{e}}(\underline{u}_i-\underline{u}_j)^2dS \leq  C\big\{ 
\| u_i-u_j\|^2_{L^2(e)}+\|u_i-u_j\|^2_{L^2(\tilde{e})} \big\}
\]
where $\tilde{e}\cap e=V_1^+=V_1^-$. In the case when  $V_1^+$ or $V_2^+$ are 
corners
of $\partial \Omega_i$ we do the same modification which give $\|\nabla u_i\|_{L^2(\tau)}^2$ 
on $\tau$ with vertices  $V_1^+$ or $V_2^+$. Using these in (\ref{A10_1}) and the resulting 
estimate  into (\ref{A10}) we get an estimate of the second inequality of Lemma
 \ref{underlineII} 
for the case when $\underline{u}_i=\underline{I}_h^iu_i$. The case when 
$\underline{u}_i=I_h^{i,j}u_i$ is proved similarly.\\

Now we prove the third inequality of Lemma \ref{underlineII}, 
the inequality (\ref{5.2_1}). We have that 
(\ref{A10_1}) still holds if we replace $\underline{u}_i$ by $\underline{u}_j$ and 
$u_i$ by $u_j$, respectively. Note that (see  Figure \ref{fig:A4})
\begin{eqnarray*}
\int_e(u_i-u_j)^2dS&\leq& C\Big\{  \big[ u_i(C_1^+)-u_j(C_1^-)]^2+
\big[ u_i(C_2^+)-u_j(C_2^-)]^2\Big\}\\
&\leq & C\int_{\widetilde{e}} (\underline{u}_i-\underline{u}_j)^2dx
\end{eqnarray*}
where $\widetilde{e}=(C_1^+,C_2^+)$.  Using these estimates we see that the third inequality is valid for $\underline{u}_i=\underline{I}_iu_i$. 
The case $\underline{u}_i=\underline{I}_h^{i,j}u_i$ is similar.\\

It remains only to estimate the fourth inequality, inequality (\ref{eq:fourthinequality}). It is proved as in the third inequality 
for $\underline{u}_i=\underline{I}_h^{i,j}u_i$.\\

The proof of Lemma \ref{underlineII} is complete.
\section{Proof of Lemma \ref{underlineI}}
For the first inequality, (\ref{eq:lem571}), note that 
on $\tau\in \mathcal{T}_h^{i}$ (see Figure \ref{fig:ref} upper-left picture)
\begin{eqnarray*}
\int_\tau  |\nabla (I_h^i \underline{u}_i)|^2dS&\leq& C\Big\{ 
\big[\underline{u}_i(C_1)-\underline{u}_i(C_2)\big]^2+
\big[\underline{u}_i(C_1)-\underline{u}_i(C_3)\big]^2+
\big[\underline{u}_i(C_2)-\underline{u}_i(C_3)\big]^2\\
&\leq & C\|\nabla \underline{u}_i\|^2_{L^2(C_1C_2C_3)}\leq 
C\|\nabla \underline{u}_i\|^2_{L^2(\tau)}.
\end{eqnarray*}
Summing this for $\tau\subset \overline{\Omega}_i$ we get the first inequality.\\

To prove the second inequality, (\ref{eq:lem572}), note that 
on $e\subset \partial \Omega_i$ (see Figure \ref{fig:A4})
\begin{eqnarray*}
\| I_h^i\underline{u}_i-I^i_h\underline{u}_j\|_{L^2(e)}^2&\leq& 
C\Big\{ \big[ \underline{u}_i(C_2^+)-\underline{u}_j(C_2^-)\big]^2+
\big[ \underline{u}_i(C_3^+)-\underline{u}_j(C_3^-)\big]^2\big\}\\
&\leq & C\frac{1}{h} \| \underline{u}_i-\underline{u}_j\|_{L^2(\underline{e})}^2 \quad (\mbox{ with $\underline{e}=(C_2^+,C_3^+)$)}\\
&\leq & C\frac{1}{h} \| \underline{u}_i-\underline{u}_j\|_{L^2(e)}^2.
\end{eqnarray*}
Summing this estimate over $e\subset \partial \Omega_i$ we get the second inequality.
\begin{figure}[htb]
\centering
{\psfig{figure=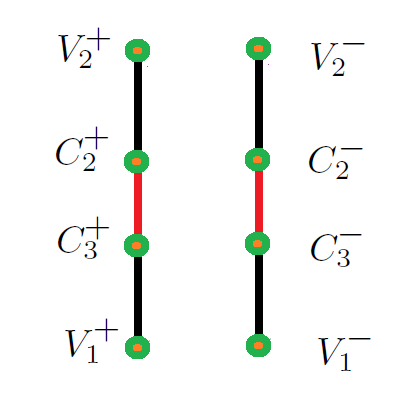,height=4cm,width=4cm,angle=0}}
\caption{Illustration of common edge refinement.}
\label{fig:A4}
\end{figure}

The equality $I_h^i\underline{u}_i=u_i$ follows from the definitions of $I^i_h$ and 
$\underline{u}_i$. 

The proof of Lemma \ref{underlineI} is now complete.

\bibliographystyle{siam}
\bibliography{references}
\end{document}